\newtheorem{theorem}{Theorem}[section]
\theoremstyle{plain}
\newtheorem{corollary}{Corollary}[section]
\newtheorem{definition}{Definition}[section]
\newtheorem{lemma}{Lemma}[section]
\newtheorem{proposition}{Proposition}[section]
\newtheorem{remark}{Remark}[section]
\numberwithin{equation}{section}
\newcommand\BR{{\mathbb {R}}}
\newcommand\vep{{\varepsilon}}
\newcommand\ep{{\epsilon}}
\newcommand\pd{{\partial}}
\newcommand\bes{\begin{eqnarray}}
\newcommand\ees{\end{eqnarray}}
\newcommand\bess{\begin{eqnarray*}}
\newcommand\eess{\end{eqnarray*}}
\newcommand\Dl{{\Delta}}
\title[]{}
\author{Guangyu Zhao and Shigui Ruan}
\address{Department of Mathematics \\
University of Miami, Coral Gables, FL 33146}
\email[]{gzhao@math.miami.edu, ruan@math.miami.edu}
\title[]
{The decay rates of Traveling Waves for a class of Nonlocal
Evolution Equations}
\begin{document}

\begin{abstract}  We obtain the precise decay rates of traveling wave for a class of
nonlocal evolution equations arising in the theory of phase
transitions. We also investigate the spectrum of the operator
obtained by linearizing at such a traveling wave. The detailed
description of the spectrum is established.
\end{abstract}
%\subjclass{???}
\maketitle

\setlength{\baselineskip}{17.5pt}

\newcommand\Xtheta{{\mathcal X}}
\newcommand\bx{{\mathbf x}}
\newcommand\bi{{\mathbf i}}
\newcommand\bt{{\small \mathbf T}}
\newcommand\bn{{\small \mathbf N}}
\newcommand\bI{{\mathbf I}}
\newcommand\curv{{K}}
\newcommand\La{{\Lambda}}
\newcommand\clo{{{\mathcal L}^\pm_0}}
\newcommand\clomp{{{\mathcal L}^\mp_0}}
\newcommand\cls{{{\mathcal L}^s}}
\newcommand\cle{{{\mathcal L}_\vep}}
\newcommand\ltwo{{{\bf X}_0}}
\newcommand\tQ{{Q}}
\newcommand\tu{{\tilde u}}
\newcommand\al{{\alpha}}
\newcommand\cl{{{\mathcal L}_{\vep,\delta}^+}}
\newcommand\clpm{{{\mathcal L}_{\vep,\delta}^\pm}}
\newcommand\clmp{{{\mathcal L}_{\vep,\delta}^\mp}}
\newcommand\ak{{\alpha_k}}
\newcommand\ank{{\alpha_{-k}}}
\newcommand\ue{{u_\vep}}
\newcommand\un{{u_n}}
\newcommand\phie{{\phi_\vep}}
\newcommand\ce{{c_\vep}}
\newcommand{\rar}{\rightarrow}

\newcommand\re{{\mathcal R}}
\newcommand\nonl{{\mathbf N}}
\newcommand\mapT{{\mathbf T}}
\newcommand\real{{\hbox{\rm Re}}}

\section{Introduction}

In this paper, we are concerned with a class of nonlocal evolution
equations of the form
\begin{equation}\label{te1}
\frac{\pd u(x,t)}{\pd t}=d\frac{\pd^2 u(x,t)}{\pd
x^2}+f(u(x,t),(J*u)(x,t))
\end{equation}
for $x\in\BR$ and $t\in\mathbb{R}^+$. Here $d\geq 0$ is a constant,
$(J*u)(x,t):=\int_{\mathbb{R}}J(x-y)u(y,t)dy$, $f$ and $J$ are
sufficiently smooth functions. Depending upon the constant $d$ and
the nonlinearity $f$ involved, equation (\ref{te1}) may model the
spatio-temporal development of various populations or epidemics (
see the surveys and references cited therein). Similar equations
have been also derived and studied from the point of view of certain
continuum limits in the dynamic Ising models (see \cite{bates1},
\cite{bates2}, \cite{bates3}, \cite{bates4}, \cite{bates5} and
references therein). Equation (\ref{te1}) has received much
attention recently, the possible interest of such an equation lies
in the fact that much more general types of interactions in the
medium can be account for. The existence as well as the uniqueness
of a traveling wave solution for integro-differential equations
(\ref{te1}) have been of great interest, both from a mathematical
standpoint and for their applications. Indeed, our study of
(\ref{te1}) is motivated by the following traveling wave problems.

A. Family of neurons
\[
\frac{\pd u}{\pd t}=-u+\int_{\mathbb R}J(x-y)S(u(y,t))dy,
\]
where $m(u):=S(u)-u$ satisfies $m'(0)<0,m'(1)<0$. $J$ is a smooth
kernel satisfying
\begin{equation}\label{tezh1}
J\geq0\,\ \text{on}\,\ \mathbb{R},\,\ \int_{\mathbb{R}}J=1.
\end{equation}

B. Ising model
\[
\frac{\pd u}{\pd t}=\text{tanh}{\beta(J*u+h)}-u,
\]
where $\beta>1$ is inverse temperature and $h$ is a constant. $J$ is
a smooth kernel supported in $[-1,1]$ satisfying (\ref{tezh1}).

C. Phase transition
\[
\frac{\pd u}{\pd t}=\varepsilon[J*u-u]+g(u),
\]
where $g(u)$ is a bistable function, $\lambda>0.$

Throughout this paper, we make the following hypotheses.

(H1) $d+|c|\neq 0.$

(H1) $J\in C(\BR)$ is even, nonnegative such that
\[\int_{\BR}J(s)ds=1 \,\,\ \text{and}\,\,\,\ \int_{\BR}J(s)e^{\rho
s}ds<+\infty\,\,\,\,\   \text{for any}\,\ \rho\in\BR.\]

(H2) $f\in C^{2,\alpha}(\BR\times\BR)$ and
$f(-1,-1)=f(1,1)=f(q,q)=0$, where $-1<q<1$.

(H3) $\pd_{s}f(r,s)>0$ for any fixed $(r,s)\in [-1,1]\times[-1,1]$.

(H4) $\pd_{r}f(\pm1,\pm1)<0$ and
$\pd_{r}f(\pm1,\pm1)<-\pd_{s}f(\pm1,\pm1)$.

(H5) $\overline{f}(\cdot)=f(\cdot,\cdot)$ is bistable, i.e.
$\overline{f}$ has exactly three zeros $\pm 1$ and $q$. There exists
an interval $[l,l']\subset(-1,1)$ such that $q\in[l,l']$,
$\overline{f}'(s)\geq 0$ for any $s\in[l,l']$ and
$\overline{f}'(s)\leq 0$ for any $s\in[-1,1]\setminus[l,l']$.

Under conditions (H1)-(H5), it is well known that equation
(\ref{te1}) possesses a unique monotone traveling wave solution
connecting the equilibria $\pm 1$ (i.e solutions of the form
$u(x,t)=U(x+ct)$ for some velocity
$c,\,\lim_{\xi\rightarrow\pm\infty}U(\xi)=\pm 1$ with $\xi=x+ct$.)
However, the precise rates at which $U$ approaches the two
homogeneous equilibria $\pm 1$ are still lacking. In this paper, we
address this issue. Our main goal is to obtain the exact decay rates
of traveling wave of (\ref{te1}) as $\xi\rightarrow\pm\infty$. With
the right rates of convergence, we can easily establish the
uniqueness of the traveling wave. Recently, a spectral analysis of
traveling waves of (\ref{te1}) was made in \cite{bates2}. The
authors considered the operator obtained by linearizing (\ref{te1})
at $U$ in $C_0(\BR)$, the space of continuous functions which vanish
at infinity. They show that the operator has spectrum in the left
half plane, bounded away from the imaginary axis except for an
algebraically simple eigenvalue at zero. This fact is of crucial
importance, which not only implies the exponential asymptotic
stability of traveling waves but also leads to the description of
dynamics of the codimension-one invariant stable manifolds. Here the
codimension-one invariant stable manifolds are transverse to the
one-dimensional manifold formed by the translates of the traveling
wave. Based on our study of asymptotical behavior of traveling
waves, we are able to obtain a detailed description of the spectrum
of the operator in the underlying $L^{p}$ space $(1\leq
p\leq\infty)$.

The paper is organized as follows: In section 2, we investigate the
exponential decay rates of the traveling wave and prove its
uniqueness. In section 3, we study the spectrum of the operator
obtained by linearizing (\ref{te1}) at the traveling wave. Some of
the results needed in the spectral analysis can be obtained by using
arguments similar to those in \cite{Mall}, and therefore we
summarize these results in the Appendix with sketched proofs that
are necessary for our purposes.

\section{decay rates of traveling waves}
In this section, we study the asymptotical behavior of traveling
wave $(c,U)\in\BR\times C^{2}(\BR)$ which satisfies
\begin{equation}\label{te3}
\left\{\begin{array}{ll}
cU'=dU''+f(U,J\ast U)\,\,\,\, \text{on}\,\,\ \BR,\\
\lim_{\xi\rightarrow\pm\infty}U(\xi)=\pm 1,\,\,\ U'>0 \,\
\text{on}\,\ \mathbb{R}.
\end{array}\right.
\end{equation}
We show that the behavior of the traveling wave near $\pm\infty$ is
governed by exponentials. Moreover, we determine the exact
exponential decay rates of $U$ as $\xi\rightarrow\pm\infty$. For our
purpose, we shall adapt the Fourier transform techniques presented
in \cite{Mall} and \cite{Pazy} (see also \cite{zhao1} and
\cite{zhao2}). By differentiating equation (\ref{te3}) with respect
to $\xi$, we obtain
\begin{equation}\label{teb1}
c(U')'=d(U')''+f_r(U,J\ast U)U'+f_s(U,J\ast U)J\ast U'.
\end{equation}
From (\ref{te3})
\begin{equation}\label{teb2}
\lim_{\xi\rightarrow\pm\infty}f_r(U,J\ast U)=f_r(\pm1,\pm1),\,\,\
\lim_{\xi\rightarrow\pm\infty}f_s(U,J\ast U)=f_s(\pm1,\pm1).
\end{equation}
Motivated by (\ref{teb1}) and (\ref{teb2}), we consider the linear
operator $L:D(L)\subset L^{p}(\BR, \mathbb{C})\rightarrow L^{p}(\BR,
\mathbb{C})$ defined by
\begin{equation}\label{teb3}
Lv:=dv''-cv'+a(\xi)v+b(\xi)J\ast v,\,\,\ \xi\in\mathbb{R},
\end{equation}
where $D(L):=\{v\in L^{p}|v',dv''\in L^p\}$, $a, b\in
C_{b}(\mathbb{R},\mathbb{R}),$ and $1\leq p\leq \infty.$

A special case occurs if both $a$ and $b$ are constants, we define
$L_0:D(L_0)\subset L^{p}(\BR, \mathbb{C})\rightarrow L^{p}(\BR,
\mathbb{C})$ by
\begin{equation}\label{te8}
L_0v:=dv''-cv'+av+bJ\ast v,\,\,\ \xi\in\mathbb{R},
\end{equation}
where $D(L_0):=D(L)$

In what follows, when convenient, $f_r(\pm1,\pm1),f_s(\pm1,\pm1)$
are denoted by $a^\pm$ and $b^\pm$, respectively.

Let $\Delta_{0}(z):{\mathbb {C}}\rightarrow{\mathbb{C}}$ be the
characteristic function associated with $L_0$, defined by
\[
\Delta_{0}(z)=dz^2-cz+a+b\int_{\BR}J(s)e^{-zs}ds.
\]

In an attempt to solve the inhomogeneous equations
\begin{equation}\label{dez5}
L_{0}v=h,\,\,\,\ h\in L^p,
\end{equation}
we may formally take the Fourier transform to obtain
\[
\Dl_{0}(i\eta)\widehat{v}(\eta)=\widehat{h}(\eta),\,\,\
\eta\in\mathbb R,
\]
where $\widehat{g}(z)=(2\pi)^{-1}\int_{\mathbb R}g(s)e^{-izs}ds,\,\
i=\sqrt{-1}$ and $z\in\mathbb C$. Note that
$\Delta_{0}^{-1}(i\eta)=O(|\eta|^{-1})$. Therefore, we can take the
inverse transform of $\Delta_{0}^{-1}(i\eta)$ to obtain solution $v$
provided $\Delta_{0}(i\eta)\neq 0$ for any $\eta\in\BR$.

\begin{definition}
The operator $L_0$ is called hyperbolic if $\Delta_0(i\eta)\neq0$
for any $\eta\in\BR.$
\end{definition}

In what follows, for a given complex number $z\in\mathbb{C}$, we
shall always denote its real part and imaginary part by $\Re z$ and
$\Im z$, respectively. The following Lemma ensures the existence of
$\Delta_{0}^{-1}(i\eta)$ under suitable conditions.
\begin{lemma}\label{tep1}
Suppose that $a<0,b>0$ and $b<-a$. Then

(a) The equation $\Dl_{0}(z)=0$ has precisely two real solutions
$\lambda^{s}<0<\lambda^{u}.$

(b) The zeros of $\Dl_{0}(z)$ in the vertical strip
$\{z\in\mathbb{C}|\lambda^{s}\leq\Re z \leq\lambda^{u}\}$ are
$\lambda^{s}$ and $\lambda^{u}$. In addition, in each vertical strip
$|\Re z|\leq K$, there lie only finite number of zeros of
$\Dl_{0}(z)$.
\end{lemma}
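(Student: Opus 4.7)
The plan is to handle the real-axis behavior of $\Delta_0$ first, then rule out off-axis zeros in the critical strip by a real-part inequality, and finally confine zeros in an arbitrary vertical strip by a growth argument at infinity.

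For (a), I would first check that the restriction of $\Delta_0$ to $\BR$ is strictly convex by differentiating under the integral sign:
\[
\Delta_0''(x)=2d+b\int_{\BR}s^{2}J(s)e^{-xs}\,ds>0,
\]
using $b>0$, $d\ge0$, and the fact that the continuous density $J$ is positive on some open interval (so the integral is strictly positive). The hypothesis $b<-a$ gives $\Delta_0(0)=a+b<0$, and $\Delta_0(x)\to+\infty$ as $|x|\to\infty$: when $d>0$ the quadratic term dominates, while when $d=0$ hypothesis (H1) gives $c\neq 0$, but the exponential moment $b\int_{\BR}J(s)e^{-xs}\,ds$ still beats the linear term $-cx$ (by restricting the integral to an interval where $J$ is bounded below from zero). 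A strictly convex function that is negative at $0$ and blows up at $\pm\infty$ has exactly two real zeros, one $\lambda^{s}<0$ and one $\lambda^{u}>0$.

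For the first part of (b), suppose $z_0=x_0+iy_0$ with $\lambda^{s}\le x_0\le\lambda^{u}$ satisfies $\Delta_0(z_0)=0$. Taking real parts and separating the $\cos$ piece,
\[
0=\Re\Delta_0(z_0)=\Delta_0(x_0)-dy_0^{2}-b\int_{\BR}J(s)e^{-x_0s}\bigl(1-\cos(y_0s)\bigr)\,ds,
\]
so
\[
\Delta_0(x_0)=dy_0^{2}+b\int_{\BR}J(s)e^{-x_0s}\bigl(1-\cos(y_0s)\bigr)\,ds\ge0.
\]
By (a), however, $\Delta_0(x_0)\le0$ throughout $[\lambda^{s},\lambda^{u}]$, so all three nonnegative summands must vanish. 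Vanishing of the integral, together with the fact that $\{J>0\}$ contains a nonempty open interval (by continuity of $J$), forces $\cos(y_0s)\equiv 1$ on that interval and hence $y_0=0$. Combined with (a), the only zeros of $\Delta_0$ in the closed strip are $\lambda^{s}$ and $\lambda^{u}$.

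For the finiteness claim, hypothesis (H1) on the exponential moments of $J$ makes $M(z):=\int_{\BR}J(s)e^{-zs}\,ds$ an entire function, hence so is $\Delta_0$. In any vertical strip $|\Re z|\le K$ the Riemann--Lebesgue lemma applied to $J(s)e^{-xs}\in L^{1}(\BR)$ yields $M(x+iy)\to 0$ as $|y|\to\infty$, uniformly in $|x|\le K$, while since $d+|c|\neq 0$ the polynomial part $dz^{2}-cz$ grows without bound along vertical lines in the strip. Thus $|\Delta_0(z)|\to\infty$ as $|\Im z|\to\infty$ within the strip, confining zeros to a bounded rectangle; an entire nonzero function has only finitely many zeros in any bounded set, and $\Delta_0\not\equiv 0$ since $\Delta_0(0)=a+b\neq 0$.

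I expect the main obstacle to be the first part of (b): one has to see the signed identity for $\Re\Delta_0(z_0)-\Delta_0(x_0)$ which produces the manifestly nonnegative integrand $J(s)e^{-x_0s}(1-\cos(y_0s))$, and then combine convexity on $[\lambda^{s},\lambda^{u}]$ with the continuity of $J$ to squeeze out $y_0=0$ uniformly in $d\ge 0$ (the degenerate case $d=0$ being the one where the algebraic bound $dy_0^{2}\le 0$ is vacuous and one is forced onto the integral term). The remaining ingredients are standard convexity on the real axis plus a Riemann--Lebesgue estimate in the complex strip.
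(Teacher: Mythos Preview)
Your proof is correct and follows essentially the same approach as the paper: convexity of $\Delta_0$ on $\BR$ for part (a), a real-part comparison for the strip in (b), and boundedness of the integral term plus analyticity for the finiteness claim. Your identity $\Re\Delta_0(x_0+iy_0)=\Delta_0(x_0)-dy_0^{2}-b\int J(s)e^{-x_0s}(1-\cos(y_0s))\,ds$ packages the argument a bit more cleanly than the paper, which treats the open strip $\lambda^{s}<\Re z<\lambda^{u}$ and the two boundary lines separately, but the underlying mechanism is the same.
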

\begin{proof}  Set $N(z)=b\int_{\BR}J(s)e^{-zs}ds$
and $D(z)=cz-dz^2-a$. Then $z$ is a zero of $\Dl_{0}(z)$ if and only
if $z$ is a solution to the equation $N(z)=D(z)$. Due to the
assumption, $D(0)-N(0)>0$. We start with the case that $z$ takes on
real values, note that $N(z)$ is a positive convex even function of
$z$ with $\frac{\pd^{2} N}{\pd z^2}>0$ for any $z\neq 0$.
Consequently, there exists two real roots of $N(z)=D(z),$ denoted by
$\lambda^{s}$ and $\lambda^{u}$ with $\lambda^{s}<0<\lambda^{u}.$ In
addition, it is easy to see that $N(z)<D(z)$ if
$z\in(\lambda^{+},\lambda^{u}),$ whereas, $N(z)>D(z)$ if
$z\in\mathbb{R}\setminus(\lambda^{+},\lambda^{u}).$ This confirms
the part $(a)$. Next let $z\in\mathbb{C}.$ Note that
\begin{equation}\label{te15}
|N(z)|\leq\int_{\BR}J(s)e^{-\Re zs}ds<\Re D(z),\,\,\ \forall
z\in\{z\in\mathbb{C}|\lambda^{s}<\Re z<\lambda^{u}\}.
\end{equation}
Moreover, we observe that
\[
\Re D(\lambda^{s}+i\mu)\geq\int_{\BR}J(t)e^{-\lambda^{s}t}dt>
\int_{\BR}J(t)e^{-\lambda^{s}t}\cos\mu tdt= \Re N(\lambda^{s}+i\mu)
\]
and
\[
\Re D(\lambda^{u}+i\mu)\geq\int_{\BR}J(t)e^{-\lambda^{u}t}dt>
\int_{\BR}J(t)e^{-\lambda^{u}t}\cos\mu tdt=\Re N(\lambda^{u}+i\mu)
\]
for any $\mu\neq 0$. This yields the first part of (b). Due to the
first inequality in (\ref{te15}), $|N(z)|$ is bounded in the
vertical strip $|\Re z|\leq K,\,\,\ K>0$. Clearly, when restricted
to such a strip, the solution set of $D(z)=N(z)$ is bounded. Since
$\Dl_{0}(z)$ is a entire function over $\mathbb{C}$, there are only
finitely many roots of $\Dl_{0}(z)$ in such a strip.
\end{proof}

\begin{lemma}\label{tte1}
Assume that the operators $L_0$ defined by (\ref{te8}) is
hyperbolic. Then for each $1\leq p\leq\infty$,
$L_0:D(L_0)\rightarrow L^{p}$ is an isomorphisms. The inverses is
given by convolution
\[
(L_{0}^{-1}h)(\xi)=(G_{0}\ast
h)(\xi)=\int_{\BR}G_0(\xi-\eta)h(\eta)d\eta
\]
with the functions $G_0$, which enjoy the estimate
\begin{equation}\label{te9}
|G_{0}(\xi)|\leq C e^{-\alpha|\xi|},\,\,\ \xi\in\mathbb{R}
\end{equation}
for some positive constants $C$ and $\alpha$. In particular, for
each $h\in L^{p}$, $u=L_{0}^{-1}h$ is the unique solution to the
inhomogeneous equation (\ref{dez5}).
\end{lemma}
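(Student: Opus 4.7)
The plan is to construct $G_0$ as an inverse Fourier transform and verify the claimed properties via contour shifting. Motivated by the formal calculation $\Delta_0(i\eta)\widehat{v}(\eta)=\widehat{h}(\eta)$, I would set
\[
G_0(\xi)=\frac{1}{2\pi}\int_{\BR}\frac{e^{i\eta\xi}}{\Delta_0(i\eta)}\,d\eta,
\]
interpreted as an oscillatory integral when $d=0$. Hyperbolicity makes the integrand continuous on $\BR$, and the growth of $|\Delta_0(i\eta)|$ as $|\eta|\to\infty$ (quadratic when $d>0$; linear when $d=0$, for which hypothesis (H1) ensures $c\neq 0$) gives at least conditional convergence of this integral.

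For the exponential bound (\ref{te9}), I first strengthen hyperbolicity to an open vertical strip. Arguing as in the second half of the proof of \lemref{tep1}(b), the entire function $N(z)=b\int_{\BR}J(s)e^{-zs}ds$ is bounded on each strip $|\Re z|\leq K$, while $|D(z)|=|cz-dz^2-a|$ grows as $|\Im z|\to\infty$ on the same strip; hence $\Delta_0$ has only finitely many zeros there, and since none lies on $i\BR$, there exists $\alpha>0$ with $\Delta_0(z)\neq 0$ on $|\Re z|\leq\alpha$. For $\xi>0$ I would shift the contour of integration to $\Re z=-\alpha$ (and to $\Re z=\alpha$ for $\xi<0$) via Cauchy's theorem, with vanishing horizontal tails guaranteed by the same growth estimate. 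This yields
\[
G_0(\xi)=\frac{e^{-\alpha|\xi|}}{2\pi}\int_{\BR}\frac{e^{i\eta\xi}}{\Delta_0(-\alpha\,\mathrm{sgn}(\xi)+i\eta)}\,d\eta,
\]
and the remaining integral is bounded uniformly in $\xi$, proving $|G_0(\xi)|\leq Ce^{-\alpha|\xi|}$.

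Because $G_0\in L^1(\BR)$, Young's inequality immediately gives $v:=G_0*h\in L^p$ for every $h\in L^p$ and every $p\in[1,\infty]$. To check $v\in D(L_0)$ and $L_0 v=h$, I would differentiate under the integral sign (using analogous contour-shift bounds on $G_0'$ and, when $d>0$, on $G_0''$) and verify the identity directly; equivalently, the Fourier identity $\widehat{G_0}(\eta)\,\Delta_0(i\eta)\equiv(2\pi)^{-1}$ delivers the same conclusion.

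For uniqueness, suppose $v\in D(L_0)\subset L^p$ satisfies $L_0 v=0$. Convolving with a smooth compactly supported mollifier $\rho_\epsilon$ yields $v_\epsilon\in C^\infty\cap\mathcal{S}'$ still satisfying $L_0 v_\epsilon=0$; taking the tempered Fourier transform gives $\Delta_0(i\eta)\widehat{v_\epsilon}=0$, and hyperbolicity forces $\widehat{v_\epsilon}=0$, so $v_\epsilon\equiv 0$, and letting $\epsilon\to 0$ yields $v=0$. I expect the main obstacle to be the combination of the contour shift (requiring careful control on $|\Delta_0|^{-1}$ along the boundary of the shifted strip, particularly in the $d=0$ case where one has only $|\Delta_0(z)|^{-1}=O(|\Im z|^{-1})$) with uniqueness at the endpoint $p=\infty$, where $v$ belongs only to a space of tempered distributions and the mollification/distributional Fourier manipulations need care.
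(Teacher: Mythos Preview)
Your approach mirrors the paper's: define $G_0$ as the inverse Fourier transform of $\Delta_0^{-1}$, establish a zero-free strip, shift the contour to extract exponential decay, then use Young's inequality and a distributional Fourier argument for uniqueness. The architecture is correct and essentially coincides with the paper.

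There is, however, a genuine gap in the $d=0$ case. After the contour shift you write
\[
G_0(\xi)=\frac{e^{-\alpha|\xi|}}{2\pi}\int_{\BR}\frac{e^{i\eta\xi}}{\Delta_0(-\alpha\,\mathrm{sgn}(\xi)+i\eta)}\,d\eta
\]
and assert that the remaining integral is bounded uniformly in $\xi$. When $d=0$ the integrand is only $O(|\eta|^{-1})$ as $|\eta|\to\infty$, so this integral is \emph{not} absolutely convergent; uniform boundedness in $\xi$ is not automatic, and indeed the contour shift itself (vanishing of the horizontal segments) requires the same missing control. You flag this as the main obstacle but do not resolve it.

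The paper's device is to split off the leading-order term explicitly: write
\[
\Delta_0^{-1}(z)=[-c(z+k)]^{-1}+R(z),\qquad k>2\alpha.
\]
The first summand has an elementary inverse Fourier transform (a pure exponential), while the remainder satisfies $R(z)=O(|\Im z|^{-2})$ uniformly on the strip, hence is absolutely integrable after the shift. This subtraction is precisely what converts the conditionally convergent oscillatory integral into an absolutely convergent one, and it is the missing ingredient in your argument. With this modification the rest of your outline goes through; the paper also carries out the uniqueness step directly in tempered distributions without your mollification, which is a harmless simplification.
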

\begin{proof}
Set
\begin{equation}\label{teza1}
G_{0}(\xi)=\int_{\mathbb
R}e^{i\xi\eta}\Delta_{0}^{-1}(i\eta)d\eta,\,\,\,\ \xi\in\mathbb R.
\end{equation}
By a similar argument used in \cite{Mall}, we may interpret $G_{0}$
as a tempered distribution and show that
\begin{equation}\label{tezj2}
dG''_{0}(\xi)-cG'_{0}(\xi)+aG_{0}(\xi)+bJ\ast
G_{0}(\xi)=\delta(\xi),
\end{equation}
where $\delta$ denotes the delta function distribution. Therefore,
when $d=0$, as a function, $G_{0}$ is absolutely continuous for all
$\xi\neq 0$ and satisfies
\[
cG'_{0}(\xi)=aG_{0}(\xi)+bJ\ast G_{0}(\xi) \,\,\ \text{almost
everywhere}.
\]
Furthermore, the function $G_{0}$ possess left- and right-hand
limits $G_{0}(0-)$ and $G_{0}(0+)$ at $\xi=0$, and there is a jump
discontinuity
\[
G_{0}(0+)-G_{0}(0-)=1.
\]
If $d>0$, then $G_{0}$ is absolutely continuous for all $\xi$ and
$G'_{0}$ is discontinuous at $\xi=0$.

We now show that the function $G_{0}$ decays exponentially at
$\pm\infty$. First observe that
\[
|\int_{\BR}J(s)e^{-zs}ds|\rightarrow 0,\,\,\ |\Im
z|\rightarrow\infty
\]
in each vertical strip $|\Re z|\leq K$.  We then have
$$\Delta_{0}(z)=dz^2-cz+O(1),\,\,\,\,\ |\Im z|\rightarrow\infty$$
uniformly in such a strip. Thanks to the assumption that
$\Delta_{0}(i\eta)\neq0$ for any $\eta\in\mathbb{R},$ Lemma
\ref{tep1} implies that there exists $\alpha>0$ such that
$\Dl^{-1}_{0}(z)$ is analytic in the strip $|\Re z|<\alpha.$  In
order to obtain (\ref{te9}), we distinguish between two cases.

The case that $d=0$.

We write
\[
\Dl^{-1}_{0}(z)=[-c(z+k)]^{-1}+R(z),
\]
where $k>2\alpha$. Clearly, in the strip $|\Re z|<\alpha$, $R(z)$ is
analytic. Moreover, $R(z)$ satisfies $R(z)=O(|\Im z|)^{-2}$
uniformly as $|\Im z|\rightarrow\infty$. Consequently, if $\xi\geq
0$, then we can calculate the function $G_{0}$ by shifting the path
of integration of integral in (\ref{teza1}) as follows:
\begin{eqnarray*}
G(\xi)&=&\frac{1}{2\pi}\int_{\BR}e^{i\xi s}\Delta_{0}^{-1}(is)ds
=\frac{1}{2\pi}\int_{\BR}e^{i\xi s}[-c(is+k)^{-1}+R(is)]ds\\
&=&-\frac{e^{-k\xi}}{2\pi
c}+\frac{e^{-\alpha\xi}}{2\pi}\int_{\BR}e^{i\xi
s}R(\lambda^{s}_{+}-\ep+is)ds.
\end{eqnarray*}
The absolute convergence in last integral yields
\[
|G_0(\xi)|\leq Ce^{-\alpha\xi},\,\,\,\ \xi\geq 0
\]
for some positive constant $C$. In the same manner, we can infer
that
\[
|G_0(\xi)|\leq Ce^{-\alpha\xi},\,\,\,\ \xi\leq 0.
\]
It is evident that the same reasoning works for $d>0$ since
$\Delta^{-1}_{0}(z)=O(|\Im z|^{-2})$ in the strip $|\Re
z|\leq\alpha$. Thus (\ref{te9}) is completed. Furthermore,
$(1+|\eta|)\Delta_{0}^{-1}(i\eta)\in L^2$ implies that $G\in H^1$
and $\widehat{G'_{0}}=is\Delta^{-1}_{0}(is)$. In addition, it
follows the same lines that
\begin{equation*}
|G'_{0}(\xi)|\leq Ce^{-\alpha|\xi|},\,\ \xi\in\mathbb{R}.
\end{equation*}
We now solve the inhomogeneous problem
\begin{equation}\label{teza2}
L_{0}v=h,\,\,\,\ h\in L^p
\end{equation}
for $h\in L^p$. For given $h\in L^p$, let $v$ be the convolution
$v=G_{+}\ast h$. Then by Young's inequality,
\[
||v||_{L^{p}}\leq ||G_{+}||_{L^{1}}||h||_{L^p}.
\]
Also note that
\[
||v'||_{L^{p}}\leq ||G'_{+}||_{L^{1}}||h||_{L^p}
\]
provided $d\neq 0$. To verify $v$ is a solution to (\ref{teza2}), it
is sufficient to show that (\ref{teza2}) holds everywhere for the
function $v$, that is,
\[
\int_{\BR}\chi(\xi)(L_{0}v)(\xi)d\xi=d\int_{\BR}\chi'(\xi)v'(\xi)d\xi-c\int_{\BR}\chi'(\xi)v(\xi)d\xi
+\int_{\BR}\chi(\xi)h(\xi)d\xi
\]
for all $C^\infty$ functions $\chi:\BR\rightarrow\mathbb{C}$ of
compact support. Indeed, it follows from the jump condition
(\ref{tezj2}) and Fubini's theorem that
\begin{eqnarray*}
\int_{\BR}\chi(\xi)(L_{0}v)(\xi)d\xi&=&a\int_{\BR}\chi(\xi)(G_{0}\ast
h)(\xi)d\xi+b\int_{\BR}\chi(\xi)(J\ast
G_{0}\ast h)(\xi)d\xi\\
&=&\int_{\BR}\chi(\xi)[b(J\ast G_{0})+aG_{0})\ast h](\xi)d\xi\\
&=&\int_{\BR}[\int_{\BR}\chi(\xi)(cG'_{0}(\xi-\eta)-dG''_{0}(\xi-\eta))d\xi]h(\eta)d\eta
+\int_{\BR}\chi(\eta)h(\eta)d\eta\\
&=&\int_{\BR}[\int_{\BR}(-c\chi'(\xi)G_{0}(\xi-\eta)+d\chi'(\xi)G'_{0}(\xi-\eta))d\xi]h(\eta)d\eta
+\int_{\BR}\chi(\eta)h(\eta)d\eta\\
&=&d\int_{\BR}\chi'(\xi)v'(\xi)d\xi-c\int_{\BR}\chi'(\xi)v(\xi)d\xi+\int_{\BR}\chi(\xi)h(\xi)d\xi.
\end{eqnarray*}
Now, to complete the proof, we only need to show that $L_0u=0$ for
some $u\in D(L_0)$ if and only if $u=0$. In fact, by interpreting
$u$ as tempered distribution and taking the Fourier transform, we
have
\[
\Delta_{0}(i\eta)\widehat{u}(\eta)=0.
\]
Since $\Delta_{0}(i\eta)\neq0$ for any $\eta\in\BR$, $\widehat{u}$
must be a zero distribution and hence $u=0$. The proof is completed
\end{proof}

We now construct the Green's function for a small perturbation
$L_{q}$ of $L_0$. Namely $L_{q}=L_{0}+Q$, here $Q:L^p\rightarrow
L^p$ is a bounded linear operator defined by
\[
(Qv)(\xi)=m(\xi)v(\xi)+n(\xi)\int_{\BR}J(\xi-\eta)v(\eta)d\eta,
\]
where $m,n\in L^{\infty}$.

\begin{proposition}\label{pte2}
Let $L_0$ be given by (\ref{te8}) and $L_{q}v(\xi)=(L_0+Q)v(\xi)$.
If
\[
\max\{||m||_{L^{\infty}},||n||_{L^{\infty}}\}\leq\varepsilon
\]
such that $\varepsilon$ is sufficiently small, then
$L_q:D(L_0)\rightarrow L^{p}$ is an isomorphism for $1\leq
p\leq\infty$. In addition, there exist positive constants $\nu$,$K$,
and the function $G_{q}:\BR^2\rightarrow\mathbb{C}$ satisfying the
pointwise estimates
\begin{equation}\label{teaa1}
|G_{q}(\xi,\eta)|\leq Ke^{\nu(\xi-\eta)}
\end{equation}
such that
\[
L^{-1}_{q}=\int_{\mathbb R}G_{q}(\xi,\eta)h(\eta)d\eta
\]
for each $h\in L^{p}$.
\end{proposition}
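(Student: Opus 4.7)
The plan is to treat $L_q$ as a small (bounded) perturbation of the isomorphism $L_0$ and invert it via a Neumann series, then unfold that series termwise into an integral kernel and use the exponential decay of $G_0$ (from \lemref{tte1}) together with the exponential moments of $J$ (from (H1)) to obtain the pointwise bound \eqref{teaa1}.

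First I would verify that $Q:L^p\to L^p$ is bounded with norm controlled by $\varepsilon$. By the triangle inequality and Young's convolution inequality,
\[
\|Qv\|_{L^p}\le \|m\|_{L^\infty}\|v\|_{L^p}+\|n\|_{L^\infty}\|J\|_{L^1}\|v\|_{L^p}\le 2\varepsilon\,\|v\|_{L^p},
\]
since $\|J\|_{L^1}=1$ by (H1). By \lemref{tte1}, $L_0^{-1}:L^p\to L^p$ is bounded with norm at most $\|G_0\|_{L^1}=:M$, and $M$ is finite uniformly in $1\le p\le\infty$ because of \eqref{te9}. Thus $\|L_0^{-1}Q\|_{L^p\to L^p}\le 2M\varepsilon<1$ provided $\varepsilon<1/(2M)$. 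Writing $L_q=L_0(I+L_0^{-1}Q)$, the Neumann series
\[
L_q^{-1}=\sum_{k=0}^{\infty}(-1)^{k}(L_0^{-1}Q)^{k}L_0^{-1}
\]
converges in $\mathcal{B}(L^p,D(L_0))$, so $L_q:D(L_0)\to L^p$ is an isomorphism for every $1\le p\le\infty$.

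Next I would extract the integral kernel. The operator $L_0^{-1}Q$ has kernel
\[
K_{1}(\xi,\eta)=G_{0}(\xi-\eta)m(\eta)+\int_{\BR}G_{0}(\xi-\zeta)n(\zeta)J(\zeta-\eta)\,d\zeta,
\]
and the kernel of $(L_0^{-1}Q)^{k}L_0^{-1}$ is obtained by iterating the composition of kernels $k$ times with $G_0$. Fix any $\nu\in(0,\alpha)$. The key estimate is the exponential-convolution bound
\[
\int_{\BR}e^{-\alpha|\xi-\zeta|}e^{-\nu|\zeta-\eta|}\,d\zeta\le C_{\alpha,\nu}\,e^{-\nu|\xi-\eta|},
\]
together with its analogue when an $e^{-\alpha|\cdot|}$ factor is replaced by $J$ (which decays faster than any exponential by (H1)). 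Setting $R:=\max(\|m\|_\infty,\|n\|_\infty)\le\varepsilon$ and using that $J\ge 0$ with $\int J=1$, an induction on $k$ gives
\[
|K_{k}(\xi,\eta)|\le (C_{\alpha,\nu}R)^{k}\,K_{0}\,e^{-\nu|\xi-\eta|}
\]
for a constant $K_0$ depending only on $\alpha$, $\nu$, and $\|G_0\|_{L^1}$. Composing once more with $G_0$ to account for the trailing $L_0^{-1}$ and summing the geometric series (valid once $C_{\alpha,\nu}\varepsilon<1/2$, say) yields a kernel $G_q(\xi,\eta)$ satisfying $|G_q(\xi,\eta)|\le K e^{-\nu|\xi-\eta|}$ with $K=K_0/(1-2C_{\alpha,\nu}\varepsilon)$, which is the desired estimate \eqref{teaa1}.

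The main technical obstacle I anticipate is the uniform exponential-decay estimate at each stage of the iteration: one must verify that the constants $C_{\alpha,\nu}^{k}$ arising from repeated convolutions of $e^{-\alpha|\cdot|}$ and $J$ do not spoil summability, and that the decay rate does not deteriorate as $k\to\infty$. This is handled by choosing $\nu$ strictly smaller than $\alpha$ once and for all, so that the convolution of $e^{-\alpha|\cdot|}$ with $e^{-\nu|\cdot|}$ loses a controlled constant but preserves the rate $\nu$; the smallness of $\varepsilon$ then absorbs the factor $C_{\alpha,\nu}$ at every step, which is precisely why the hypothesis on $\max(\|m\|_{L^\infty},\|n\|_{L^\infty})$ appears. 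Once this inductive estimate is in hand, the remaining verification that the kernel $G_q$ so constructed actually represents $L_q^{-1}$ is immediate from the Neumann series computation together with the defining identity \eqref{tezj2} for $G_0$, exactly as in the proof of \lemref{tte1}.
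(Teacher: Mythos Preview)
Your proposal is correct and follows essentially the same Neumann-series strategy as the paper: write $L_q$ as a bounded perturbation of the isomorphism $L_0$, invert via a convergent series, and propagate the exponential decay of $G_0$ through the iterated kernels. The only bookkeeping differences are that the paper factors on the other side, $L_q=(I+QL_0^{-1})L_0$, and, instead of fixing $\nu<\alpha$ in advance and summing a geometric series as you do, it bounds the $j$th kernel by the $j$-fold self-convolution $\Psi^{*j}$ with $\Psi(\xi)=2\varepsilon K_1 e^{-\alpha|\xi|}$ and invokes Lemma~5.1 of \cite{Mall} to sum, which yields the explicit rate $\nu=\sqrt{\alpha^2-4\varepsilon K\alpha}$; your more elementary induction gives the same conclusion with a less sharp but perfectly adequate $\nu$.
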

\begin{proof}
The proof is similar to the proof of . We shall only sketch the
proof of Proposition. Since $L_0:D(L_0)\rightarrow L^{p}$ is an
isomorphism, $L_q=(I-QL^{-1}_{0})L_0.$ It then follows from Theorem
1.16 of \cite{Kato} that
\begin{equation}\label{tezbb}
L^{-1}_{q}=L^{-1}_{0}\sum^{\infty}_{j=0}(QL^{-1}_{0})^{j}
\end{equation}
as long as $||QL^{-1}_{0}||<1.$ Set
\[
\Gamma_1(\xi,\eta)=m(\xi)G_0(\xi-\eta)+n(\xi)\int_{\BR}J(\xi-s)G_{0}(s,\eta)ds.
\]
In view of Lemma \ref{tte1}, $(QL^{-1}_{0})^j$ is an integral
operator, whose kernel is defined inductively by
\[
\Gamma_j(\xi,\eta)=\int_{\mathbb{R}}\Gamma_1(\xi,\tau)\Gamma_{j-1}(\tau,\eta)d\tau
\]
for all $j\geq 2$. Thanks to (H1), a straightforward calculation
shows that
\begin{equation}\label{te11}
|\Gamma_1(\xi,\eta)|\leq \varepsilon
Ce^{-\alpha|\xi-\eta|}+\varepsilon
e^{-\alpha|\xi-\eta|}\int_{\mathbb{R}}J(s)e^{\alpha|s|}ds.
\end{equation}
Therefore, there exists positive constant $K_1$ such that
$|\Gamma_1(\xi,\eta)|\leq 2\varepsilon Ke^{-\alpha|\xi-\eta|}.$ In
addition, by using (\ref{te11}), we infer that
\[
|\Gamma_j(\xi,\eta)|\leq {\Psi}^{\ast j}(\xi-\eta),\,\,
\Psi(\xi)=2\varepsilon K_1e^{-\nu|\xi|},
\]
where $\Psi^{*j}=\Psi*\Psi^{*(j-1)}$, is the $j$-fold convolution of
$\Psi$ with itself. By Lemma 5.1 of \cite{Mall}, we infer that
\begin{equation}\label{ptzz0}
\sum^{\infty}_{j=1}|\Gamma_{k}(\xi,\eta)|\leq K_2e^{-\nu|\xi-\eta|}.
\end{equation}
Here $\nu=\sqrt{\alpha^2-4\varepsilon K\alpha}$ and
$K_2=\frac{2\varepsilon K\alpha}{\beta}$.  Now Let
\[
G_{q}(\xi,\eta)=G_0(\xi-\eta)+\int_{\mathbb
R}G_0(\xi-s)(\sum_{j=1}^{\infty}\Gamma_j(s,\eta))ds.
\]
Then a direct calculation yields
\[
|G_{q}(\xi,\eta)|\leq Ke^{-\nu|\xi-\eta|}.
\]
Furthermore, it is easy to see that
\[
L^{-1}_{q}h=\int_{\mathbb R}G_{q}(\xi,\eta)h(\eta)d\eta.
\]
Therefore, the proof is completed.

\end{proof}

Next we consider the operator $L$ defined by (\ref{teb3}).
Hereafter, we assume that
\begin{equation}\label{tca1}
\lim_{\xi\rightarrow+\infty}a(\pm\xi)=a^{\pm},\,\,
\lim_{\xi\rightarrow+\infty}b(\pm\xi)=b^{\pm},
\end{equation}
where $a^{+}, a^{-}, b^{+},$ and $b^{-}$ are constants. Let
$L_{\pm}:D(L_0)\rightarrow L^{p}$ be the  operator defined by
\[
L_{\pm}u=du''-cu'+a^{\pm}+b^{\pm}J*u,
\]
respectively.
\begin{definition}
The operator $L$ is called asymptotic hyperbolic
if both $L_{+}$ and $L_{-}$ are hyperbolic.
\end{definition}

\begin{proposition}\label{tezpp1}
Assume that \ref{tca1} is satisfied and $L$ is asymptotically
hyperbolic. Suppose that there are bounded sequences $u_n\in D(L)$
and $h_n\in L^p$ such that $Lu_n=h_n$ and $h_n\rightarrow h^{*}$ in
$L^{p}$. Then there exists a subsequence $u_{n^{\prime }}$ and some
$u^{*}\in D(L)$ such that $u_{n^{\prime }}\rightarrow u^{*}$ in
$D(L)$ and $Lu^{*}=h^{*}.$ The same conclusion hold true for
$L^{*}.$
\end{proposition}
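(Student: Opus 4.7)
The plan is to combine the invertibility of the asymptotic constant-coefficient operators $L_\pm$ (provided by Proposition \ref{pte2}) with local compactness on a large bounded interval. First I would fix $\vep>0$ small enough that Proposition \ref{pte2} applies to $\vep$-perturbations of both $L_\pm$, and choose $M>0$ so large that (\ref{tca1}) yields $|a(\xi)-a^\pm|<\vep$ and $|b(\xi)-b^\pm|<\vep$ for $\pm\xi\geq M$. I introduce a smooth partition of unity $\phi_++\phi_0+\phi_-\equiv 1$ on $\BR$ with $\phi_\pm$ supported in $\{\pm\xi\geq M\}$ and $\phi_0$ supported in $[-M-1,M+1]$. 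Define $\tilde a_\pm(\xi):=a(\xi)$ for $\pm\xi\geq M$ and $\tilde a_\pm(\xi):=a^\pm$ otherwise, and $\tilde b_\pm$ analogously. Then $\tilde L_\pm v:=dv''-cv'+\tilde a_\pm(\xi)v+\tilde b_\pm(\xi)J*v$ differs from $L_\pm$ by coefficients of $L^\infty$ norm at most $\vep$, so by Proposition \ref{pte2} it is an isomorphism $D(L_0)\to L^p$ whose inverse is bounded into $D(L_0)$ by the open mapping theorem.

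Set $w_n^\pm:=\phi_\pm u_n$ and $w_n^0:=\phi_0 u_n$. Using $Lu_n=h_n$ together with the Leibniz rule and the convolution identity
\[
b(\xi)\phi(\xi)(J*u)(\xi)-b(\xi)(J*(\phi u))(\xi)=b(\xi)\int_{\BR} J(\xi-y)[\phi(\xi)-\phi(y)]u(y)\,dy,
\]
a direct calculation yields $\tilde L_\pm w_n^\pm=\phi_\pm h_n+C_n^\pm$, where $C_n^\pm$ is the sum of (i) the compactly supported boundary terms $d\phi_\pm''u_n+2d\phi_\pm'u_n'-c\phi_\pm'u_n$; (ii) the convolution commutator from the identity above; and (iii) the perturbation remainder $(\tilde b_\pm-b)J*w_n^\pm$. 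The arbitrary-order exponential moment hypothesis on $J$ forces the kernels appearing in (ii) and (iii) to decay exponentially away from the transition zone $[M,M+1]$, uniformly in $n$.

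Since $\{u_n\}$ is bounded in $D(L)$, it is bounded in $W^{1,p}_{\mathrm{loc}}$ (and in $W^{2,p}_{\mathrm{loc}}$ when $d>0$), so Rellich--Kondrachov together with a diagonal argument extracts a subsequence (still denoted $u_n$) with $u_n$ converging in $L^p_{\mathrm{loc}}$ and, when $d>0$, $u_n'$ also converging in $L^p_{\mathrm{loc}}$. The uniform exponential decay of $C_n^\pm$ at infinity combined with this local convergence gives, via a ``tail is small, interior is Cauchy'' splitting, that $C_n^\pm$ is Cauchy in $L^p$. Since $\phi_\pm h_n\to\phi_\pm h^*$ in $L^p$ and $\tilde L_\pm^{-1}$ maps $L^p$ continuously into $D(L_0)$, we obtain $w_n^\pm\to w^{*,\pm}$ in $D(L)$. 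An identical argument applied to $\tilde L_+(\phi_0 u_n)=\phi_0 h_n+C_n^0$, where $C_n^0$ is a bounded remainder of analogous structure, gives $w_n^0\to w^{*,0}$ in $D(L)$. Summing, $u_n\to u^*:=w^{*,+}+w^{*,0}+w^{*,-}$ in $D(L)$, and continuity of $L:D(L)\to L^p$ forces $Lu^*=h^*$.

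The main obstacle will be the careful handling of the non-compactly supported pieces (ii) and (iii): the arbitrary-order exponential moments of $J$ must be used to show that their $L^p$-tails are uniformly small outside a large compact set, which is precisely what allows local $L^p$ convergence of $\{u_n\}$ to promote $C_n^\pm$ to a Cauchy sequence in $L^p$. A minor subtlety in the case $d=0$ is that $L^p$ convergence of $u_n'$ must be recovered from $L^p$ convergence of $u_n$ via the algebraic identity $cu_n'=-h_n+au_n+bJ*u_n$, legitimate since $c\neq 0$ by (H1). Finally, the same conclusion for $L^*$ follows by identical reasoning, because the formal $L^p$--$L^{p'}$ adjoint $L^*v=dv''+cv'+a(\xi)v+J*(b(\xi)v)$ has the same structural type, its asymptotic limits $L_\pm^*$ inherit hyperbolicity from $L_\pm$, and Proposition \ref{pte2} applies equally to adjoint perturbations.
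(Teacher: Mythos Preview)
Your partition-of-unity/parametrix argument is correct and constitutes a genuinely different route from the paper's. The paper does not localize via cutoffs; instead it first invokes the pointwise Green's-function estimate
\[
|u_n(\xi)|\leq K_1e^{-\mu|\xi|}\|u_n\|_{L^\infty}+K_2\int_{\BR}e^{-\mu|\xi-\eta|}|h_n(\eta)|\,d\eta
\]
(coming from the perturbed asymptotic operators, as in Lemma~\ref{tezlp1}), uses equicontinuity to extract a subsequence converging uniformly on compacts, and then upgrades to global $L^p$ convergence by the generalized dominated convergence theorem together with the Br\'ezis--Lieb lemma; convergence in $D(L)$ is finally recovered from an a~priori resolvent identity. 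Your approach replaces this pointwise machinery by Rellich--Kondrachov on compact intervals and an $L^p$ tail estimate for the commutator and remainder terms, then transfers convergence through the bounded inverse $\tilde L_\pm^{-1}$. What the paper's approach buys is a direct pointwise decay bound on $u_n$ that is reused elsewhere (e.g.\ in the eigenfunction decay statements); what your approach buys is a cleaner, more modular argument in the spirit of Lockhart--McOwen Fredholm theory that avoids Br\'ezis--Lieb and treats all $1\le p\le\infty$ uniformly. One point worth making explicit when you fill in the details: for the ``interior is Cauchy'' half of your splitting, the convolution factors $J\!*\!w_n^\pm$ on a compact $\xi$-interval still feel $u_n$ over an unbounded $y$-set, so you must split the $y$-integral once more (near part handled by $L^p_{\mathrm{loc}}$ convergence, far part uniformly small by $\|J\mathbf{1}_{|\cdot|>S}\|_{L^1}\to 0$); this requires only $J\in L^1$, not the full exponential-moment hypothesis.
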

\begin{proof}
Due to the assumption, for any $\varepsilon>0$, there exists
$\tau(\varepsilon)>0$ such that $|a(\pm\xi)-a^{\pm}|\leq\varepsilon$
and $|b(\pm\xi)-b^{\pm}|\leq\varepsilon$ whenever $\xi\geq\tau$. Now
let
\[
\theta _{\tau}(\xi)=\left\{
\begin{array}{ll}
1,\,\,\ & \xi \geq \tau,  \\
0,\,\,\ & \xi <\tau,
\end{array}
\right.
\]
Let the bounded linear operators $Q^{\pm}: L^{p}\rightarrow L^{p},$
defined by
\[(Q^{\pm}v)(\xi)=\theta _{\tau}(\pm\xi)a(\xi)+\theta
_{\tau}(\xi)b(\xi)\int_{\mathbb{R}}J(\xi-\eta)v(\eta)d\eta,
\] respectively. Set $L_{q^{\pm}}=L_{\pm}+Q^{\pm}$. It is clear that
$L_{q^{\pm}}u_{n}=(L_{q^{\pm}}-L)u_{n}+f_n$.  $p>1,$then the
embedding theorem implies that the sequence $u_n$ is equicontinuous
on any compact interval. In case of $d>0,$ $u_n^{\prime }$ is also
equicontinuous on any compact interval. When $p=1$, by means of an
argument similar to one used in \cite{Mall}, it can be shown that
the above conclusions are still true. Therefore there is a
subsequence, still labeled by $u_n$, which converges to $u^{*}$
uniformly on any compact interval for some function
$u^{*}:\BR\rightarrow \mathbb C$. Clearly, $u^{*}\in L^\infty $
since $u_n$ is bounded in $W^{1,p}.$

Next we show that $u_n\rightarrow u^{*}$ in $L^p$ if $1\leq p<\infty
.$ By the assumption, there exists $\widehat{C}>0$ such that
$||u_n||_{L^\infty }\leq \widehat{C}$. Let
\[
g_n(\xi )=K_2\int_{\BR}e^{-\mu |\xi -\eta |}|h_n(\eta)|d\eta ,\quad
\quad g^{*}(\xi )=K_2\int_{\BR}e^{-\mu |\xi -\eta
|}|h^{*}(\eta)|d\eta ,
\]
then it follows from (\ref{te23}) that
\[
|u_n(\xi )|\leq \widehat{C}K_1e^{-\mu |\xi |}+g_n(\xi),\quad \quad
\xi \in \BR.
\]
By using the H\"{o}lder inequality and Young inequality, we find
that $g_n,$ $g^{*}\in L^{p}\cap L^{\infty}$, and
\[
\lim_{n\rightarrow\infty}||g_n-g^{*}||_{L^{\infty}}=0, \qquad
\lim_{n\rightarrow\infty}||g_n-g^{*}||_{L^{p}}=0.
\]
The generalized Lebesgue dominated convergence theorem implies
$||u_n||_{L^{p}}\rightarrow ||u^{*}||_{L^p}$. Finally,
Br\'{e}zis-Lieb Lemma yields
\[
\lim_{n\rightarrow\infty}||u_n-u^{*}||_{L^{p}}=0.
\]
Due to (\ref{te24}), we have
\[
u_n=\Lambda _\omega ^{-1}((\lambda-\omega)Iu_n-Lu_n+h_n).
\]
By passing the limit $n\rightarrow \infty $, we see that
\[
u^{*}=\Lambda _\omega ^{-1}((\lambda-\omega)Iu^{*}-Lu^{*}+h^{*}).
\]
Namely,
\[
(\Pi_L-\lambda I)u^{*}=h^{*}.
\]
Furthermore, applying (\ref{tezap001}) or (\ref{tezap002}) to the
difference $u_n-u^{*}$ yields
\[
u_n\rightarrow u^{*}\,\ \text{in}\,\, W^{1,p} (\text{or}\,\
W^{2,p}\,\ \text{if}\,\ d>0).
\]
It remains to show that the assertion is valid when $p=\infty$. We
first write for each $u_n$ in the form
\[
cu_n(\xi _1)=cu_n(\xi _2)+\int_{\xi _2}^{\xi _1}[L(\eta )u_n(\eta
)-h_n(\eta )d\eta,\,\,\ \text{for}\,\ d=0
\]
and
\[
du_n^{\prime }(\xi _1)=du_n^{\prime }(\xi _2)+\int_{\xi _2}^{\xi
_1}[cu_n^{\prime }(\eta )-Lu(\eta )+h_n(\eta )]d\eta,\,\,\
\text{for}\,\ d>0,
\]
where $-\infty <\xi _2<\xi _1<\infty $. Since $(J*u_n)(\cdot)$
converges $(J*u^{*})(\cdot)$ pointwise and $J*u_n$ is uniformly
bounded, upon taking the limit, we find
\[
cu^{*}(\xi _1)=cu^{*}(\xi _2)+\int_{\xi _2}^{\xi _1}[L(\eta
)u^{*}(\eta)-h_n(\eta )d\eta,\,\,\ \text{for}\,\ d=0
\]
and
\[
d{u^{*}}'(\xi _1)=d{u^{*}}'(\xi _2)+\int_{\xi _2}^{\xi
_1}[c{u^{*}}'(\eta )-Lu(\eta )+h_n(\eta )]d\eta,\,\,\ \text{for}\,\
d>0.
\]
Therefore, we have
\[
d(u^{*})''(\xi)-c(u^{*})'(\xi)+(Lu)(\xi)=h(\xi),\,\,\ \text{for
any}\,\ \xi\in\mathbb{R}.
\]
Applying (\ref{te23}) to $u_n-u^{*}$ yields that
\[
|(u_n-u^{*})(\xi)|\leq 2\widehat{C}K_1e^{-\mu
|\xi|}d\xi+K_2\mu||h_n-h^{*}||_{L^{\infty}}.
\]
Since $h_n\rightarrow h^{*}$ in $L^{\infty}$, for any
$\varepsilon>0$, there exist positive constants $N(\varepsilon)$ and
$T(\varepsilon)$ such that
$|(u_n-u^{*})(\xi)|\leq\frac{1}{2}\varepsilon$ whenever
$n>N(\varepsilon)$ and $|\xi|>T(\varepsilon)$. In addition, we
already know that $u_n$ uniformly converge $u^{*}$ on any compact
interval. Hence, there exists $\widetilde{N}(\varepsilon)>0$ such
that $||(u_n-u^{*})||_{L^{\infty}}\leq\varepsilon$ if
$n>\widetilde{N}(\varepsilon)$. Once again, the similar reasoning
shows that $u_n\rightarrow u^{*}$ in $W^{1,\infty}$ (or
$W^{2,\infty}$). Thus, the proof is completed.
\end{proof}

\begin{proposition}\label{pte3}
Let $(c,U)$ be the solution to (\ref{te1}), then there exist
positive constants $\nu $ and $C_\nu $ such that
\begin{equation}\label{te14}
|U^{\prime }(\xi)|\leq C_\nu e^{(\lambda^{s} _{+}+\nu )\xi},\,\,\
\xi\geq 0
\end{equation}
and
\begin{equation}\label{te18}
|U^{\prime}(\xi)|\leq C_\nu e^{(\lambda^{u} _{-}-\nu)\xi},\,\,\
\xi\leq 0,
\end{equation}
where $\nu<\min\{-\lambda^{s} _{+},\lambda^{u}_{-}\}$.
\end{proposition}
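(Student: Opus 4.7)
The plan is to construct a decaying exponential supersolution $\psi$ for the linearized operator governing $U'$ and then to apply a nonlocal maximum-principle comparison tuned to the desired exponent.

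First, differentiating (\ref{te3}) in $\xi$ as in (\ref{teb1}) shows that $v:=U'$ satisfies $Lv=0$, where $L$ has the form (\ref{teb3}) with $a(\xi)=f_r(U(\xi),(J\ast U)(\xi))$ and $b(\xi)=f_s(U(\xi),(J\ast U)(\xi))$. Hypotheses (H3)--(H4) together with (\ref{teb2}) give $a(\pm\infty)=a^\pm<0$, $b(\pm\infty)=b^\pm>0$ and $a^\pm+b^\pm<0$, so \lemref{tep1} applies to both $L_\pm$ and produces real characteristic roots $\lambda^s_\pm<0<\lambda^u_\pm$. I will also use that $U'(\xi)\to 0$ as $\xi\to\pm\infty$: when $d>0$, $U''$ is bounded by (\ref{te3}) and $U'\in L^1(\BR)$ since $U$ is monotone and bounded; when $d=0$, (H1) forces $c\neq 0$ and (\ref{te3}) reads $cU'=f(U,J\ast U)\to f(\pm 1,\pm 1)=0$.

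To prove (\ref{te14}), fix $\gamma:=\lambda^s_++\nu$ with $\nu>0$ small and set $\psi(\xi):=e^{\gamma\xi}$. A direct computation gives
\[
L\psi(\xi)=\Bigl[d\gamma^2-c\gamma+a(\xi)+b(\xi)\int_{\BR} J(s)e^{-\gamma s}\,ds\Bigr]\psi(\xi),
\]
and the bracket converges to $\Delta_0^+(\gamma)$ as $\xi\to+\infty$. Since $\gamma\in(\lambda^s_+,\lambda^u_+)$, the convexity argument in the proof of \lemref{tep1}(a) yields $\Delta_0^+(\gamma)<0$, so there exists $T\geq 1$ with $L\psi(\xi)\leq -\tfrac{1}{2}|\Delta_0^+(\gamma)|\,\psi(\xi)<0$ and $a(\xi)+b(\xi)<0$ for all $\xi\geq T$. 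Set $A:=\|U'\|_{L^\infty}e^{-\gamma T}$ and $w:=U'-A\psi$. Because $\gamma<0$, one has $A\psi\geq\|U'\|_{L^\infty}$ on $(-\infty,T]$, so $w\leq 0$ there, and $w(\xi)\to 0$ as $\xi\to+\infty$. If $\sup_{\xi\geq T}w>0$, this supremum is attained at some $\xi^*\in(T,\infty)$, where $w'(\xi^*)=0$, $w''(\xi^*)\leq 0$ (needed only when $d>0$) and $w(\eta)\leq w(\xi^*)$ for \emph{every} $\eta\in\BR$. Since $J\geq 0$ and $\int_{\BR}J=1$, $(J\ast w)(\xi^*)\leq w(\xi^*)$, whence
\[
Lw(\xi^*)\leq\bigl(a(\xi^*)+b(\xi^*)\bigr)w(\xi^*)<0.
\]
On the other hand $Lw=LU'-AL\psi=-AL\psi>0$ on $[T,\infty)$, a contradiction. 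Therefore $U'\leq A\psi$ on $[T,\infty)$, and the bound extends to $[0,T]$ by the choice of $A$; taking $C_\nu:=A$ yields (\ref{te14}). Estimate (\ref{te18}) follows symmetrically with $\gamma:=\lambda^u_--\nu$ and the same exponential $\psi$, which now decays as $\xi\to-\infty$.

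The step I expect to be the main obstacle is the nonlocal comparison: because $J$ is supported on all of $\BR$, the inequality $(J\ast w)(\xi^*)\leq w(\xi^*)$ requires control of $w$ \emph{globally} and not merely on the half-line of interest, which is precisely why $A$ must be chosen large enough to absorb $\|U'\|_{L^\infty}$ on the complementary half-line. The remaining ingredients---$\Delta_0^+(\gamma)<0$ inside the open strip, $a(\xi)+b(\xi)<0$ near $+\infty$, and $U'(\xi)\to 0$ at infinity---are direct consequences of \lemref{tep1} and the standing hypotheses.
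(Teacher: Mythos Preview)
Your argument is correct. The key steps---$\Delta_0^+(\gamma)<0$ for $\gamma\in(\lambda^s_+,\lambda^u_+)$, the choice of $A$ forcing $w\le 0$ on $(-\infty,T]$, and the global inequality $(J\ast w)(\xi^*)\le w(\xi^*)$ from $J\ge 0$, $\int J=1$---fit together cleanly, and the contradiction $Lw(\xi^*)<0$ versus $Lw=-AL\psi>0$ on $[T,\infty)$ goes through. One small regularity point you do not mention: when $d>0$ you need $w\in C^2$ so that $w''(\xi^*)\le 0$ makes sense; this follows by bootstrapping the equation $dU''=cU'-f(U,J\ast U)$, since the right side is $C^1$.

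Your route is genuinely different from the paper's. The paper does not use a comparison principle at all; instead it splits $L=\overline{L}_++\overline{M}_+$ where $\overline{L}_+$ is a small perturbation of the constant-coefficient operator $L_+$ and $\overline{M}_+$ is supported in $(-\infty,\tau]$, then invokes \propref{pte2} to produce a Green's function $\overline{G}_+$ with the exponential bound $|\overline{G}_+(\xi,\eta)|\le Ke^{-\nu|\xi-\eta|}$, and reads off the decay of $V=U'$ from the integral representation $V(\xi)=\int_{-\infty}^\tau \overline{G}_+(\xi,\eta)[-\overline{M}_+(\eta)V(\eta)]\,d\eta$. Your approach is shorter and more elementary, but it leans on the positivity of $U'$ and the sign conditions $b>0$, $a+b<0$ near $+\infty$; the paper's Green's-function machinery works without any sign information on the solution and is exactly the apparatus reused in the Fourier-analytic sharpening of \thmref{ttz3}. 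So you gain brevity here at the cost of a method that does not feed into the next step.
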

\begin{proof}
We shall use Proposition \ref{pte2} to derive (\ref{te14}) and
(\ref{te18}). Recall that
\[
M_{\pm}(\xi)v=[f_r(U,J\ast U)-f_r(\pm 1,\pm 1)]v+[f_s(U,J\ast
U)-f_s(\pm 1,\pm 1)]J\ast v.
\]
Due to (\ref{teb2}), for any $\varepsilon>0$, there exists $\tau>0$
such that
\[
||M_{+}(\xi)||\leq\varepsilon,\,\ \text{as}\,\
\xi\geq\tau,\,\,\text{and}\,\, ||M_{-}(\xi)||\leq\varepsilon,\,\
\text{as}\,\ \xi\leq-\tau.
\]

Now let
\begin{equation}\label{te2.2}
\vartheta _{+}(\xi)=\left\{
\begin{array}{ll}
1,\,\,\ & \xi \geq \tau,  \\
0,\,\,\ & \xi <\tau,
\end{array}
\right. \quad \quad \vartheta _{-}(\xi )=\left\{
\begin{array}{ll}
0,\,\,\ & \xi >-\tau,  \\
1,\,\,\ & \xi \leq -\tau.
\end{array}
\right.
\end{equation}
We also set
\[
\overline{L}_{\pm}(\xi
)=L_{\pm}+\vartheta_{\pm}(\xi)M_{\pm}(\xi),\qquad
\overline{M}_{\pm}(\xi)=(1-\vartheta_{\pm}(\xi))M_{\pm}(\xi).
\]

Let $V$ stand for $U^{\prime}$ and Let
\[
(\Pi_{\overline{L}_{\pm}}v)(\xi)=dv''(\xi)-cv'(\xi)+\overline{L}_{\pm}v(\xi).
\]
Then $(\Pi_{\overline{L}_{\pm}}V)(\xi)=-\overline{M}_{\pm}(\xi
)V(\xi).$ Since $\overline{M}_{\pm}(\xi)$ is a bounded linear
operator and $\varepsilon$ can be made arbitrary small by
manipulating $\tau$, we can choose $\tau$ sufficiently large such
that the operators $\Pi_{\overline{L}_{\pm}}$ satisfies the
conditions of Proposition \ref{pte2}. Let $\overline{G}_+:{\mathbb
R}^2\rightarrow \mathbb C$ denote the Green's function for $\Pi
_{\overline{L}_+}$, which enjoys the estimate (\ref{teaa1}).
Therefore, for every $\xi \in\mathbb R$,
\begin{eqnarray*}
V(\xi) &=&\int_{-\infty }^\infty \overline{G}_+(\xi ,\eta
)[-\overline{M}_+(\eta )V(\eta
)]d\eta \\
&=&\int_{-\infty }^\tau \overline{G}_+(\xi,\eta)[-M_1(\eta)V(\eta
)]d\eta ,
\end{eqnarray*}
where we use the fact that $M_1(\eta)=0$ for all $\eta\geq\tau $.
Consequently, for any $\xi \geq \tau $, we have
\begin{eqnarray*}
|V(\xi)| &\leq &C_1\int_{-\infty }^\tau e^{(\lambda^{s}_{+}+\nu
)(\xi
-\eta )}||M||_{L^\infty }||V||_{L^\infty }d\eta \\
&\leq &\widetilde{C}_1 e^{(\lambda^{s}_{+}+\nu )\xi }.
\end{eqnarray*}
Since $V$ is bounded on $\mathbb R$, it is possible to choose
$C_\nu>0$ such that the desired estimate (\ref{te14}) holds for all
$\xi\geq 0$. Analogously,
\[
|U^{\prime}(\xi)|\leq C_\nu e^{(\lambda^{u} _{+}-\nu)\xi},\,\,\
\xi\leq 0.
\]
The proof is completed.
\end{proof}
Now we are ready to give the main result in this section

\begin{theorem}\label{ttz3}
Let $(c,U)$ be the solution to (\ref{te1}), then there exist
positive constants $D_1$ and $D_2$ such that
\begin{equation}\label{tezmt1}
U(\xi)=1-D_1e^{\lambda^{s}_{+}\xi }[1+o(1)],\,\,\ \text{as} \,\
\xi\rightarrow \infty,
\end{equation}
and
\begin{equation}\label{tezmt2}
U(\xi)=-1+D_2e^{\lambda^{u}_{-}\xi }[1+o(1)],\,\,\ \text{as} \,\
\xi\rightarrow -\infty,
\end{equation}
where $\lambda^{s}_{+}<0, \lambda^{u}_{-}>0$ are the roots of
$\Delta_{L_\pm}(z),$ respectively.
\end{theorem}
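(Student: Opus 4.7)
The plan is to upgrade the preliminary exponential bound of \propref{pte3} into the precise leading-order asymptotic via a bilateral Laplace transform analysis. The two ends are symmetric, so I focus on the $\xi\to+\infty$ case. Setting $V(\xi):=1-U(\xi)$, so that \eqref{tezmt1} amounts to $V(\xi)=D_1 e^{\lambda^s_+\xi}(1+o(1))$, and using $f(1,1)=0$ to Taylor expand, the traveling-wave equation \eqref{te3} rewrites as
\[
L_+ V = R, \qquad R(\xi) = O\bigl(V(\xi)^2 + (J*V)(\xi)^2\bigr),
\]
wherever $V$ is small. Hypotheses (H3)--(H4) give $a^+=f_r(1,1)<0$, $b^+=f_s(1,1)>0$ and $b^+<-a^+$, so \lemref{tep1} applies: the characteristic function $\Delta_+$ has the simple real root $\lambda^s_+<0$, and the finiteness clause in \lemref{tep1}(b) yields a spectral gap $\delta_0>0$ with no other zero of $\Delta_+$ having real part in $(\lambda^s_+-\delta_0,\lambda^s_++\delta_0)$.

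Next, \propref{pte3} gives $|V(\xi)|\leq C_\nu e^{(\lambda^s_++\nu)\xi}$ for $\xi\geq 0$ (integrating the bound on $V'$). Fix a smooth cutoff $\chi$ with $\chi\equiv 1$ on $[T+1,\infty)$ and $\chi\equiv 0$ on $(-\infty,T]$ for $T$ large, and let $\tilde V:=\chi V$. A direct calculation produces $L_+\tilde V=g$ with
\[
g = \chi R + \bigl(d\chi'' V+2d\chi' V'-c\chi' V\bigr) + b^+\bigl(J*(\chi V)-\chi(J*V)\bigr).
\]
The first term decays like $e^{2(\lambda^s_++\nu)\xi}$; the second is compactly supported in $[T,T+1]$; and the convolution commutator, being $\int J(\xi-\eta)[\chi(\eta)-\chi(\xi)]V(\eta)d\eta$, decays faster than any exponential thanks to (H1), because for $\xi$ large the requirement $\chi(\eta)-\chi(\xi)\neq 0$ confines $\eta$ to a fixed bounded set and forces $J(\xi-\eta)$ into its super-exponentially decaying tail. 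Taking $\nu$ small with $2(\lambda^s_++\nu)<\lambda^s_+-\delta_0/2$, the function $g$ decays strictly faster than $e^{\lambda^s_+\xi}$, so its bilateral Laplace transform $\hat g(z)=\int_{\BR}e^{-z\xi}g(\xi)d\xi$ is analytic on a strip containing the line $\Re z=\lambda^s_+$.

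Transforming $L_+\tilde V=g$ gives $\Delta_+(z)\widehat{\tilde V}(z)=\hat g(z)$, hence $\widehat{\tilde V}=\hat g/\Delta_+$. In the strip $\lambda^s_+-\epsilon<\Re z<\lambda^u_+$ (with $\epsilon<\delta_0$) the quotient is meromorphic with a single simple pole at $\lambda^s_+$, and the large-$|\Im z|$ decay of $\Delta_+^{-1}$ established inside the proof of \lemref{tte1} keeps the Bromwich integral absolutely convergent on vertical lines. Shifting the inversion contour across $\lambda^s_+$ and collecting the residue produces
\[
\tilde V(\xi)=D_1 e^{\lambda^s_+\xi}+O\bigl(e^{(\lambda^s_+-\epsilon)\xi}\bigr), \qquad D_1=\frac{\hat g(\lambda^s_+)}{\Delta_+'(\lambda^s_+)},
\]
which coincides with $V(\xi)$ for $\xi\geq T+1$ and gives \eqref{tezmt1} up to the sign of $D_1$. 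Positivity of $D_1$ comes from $V>0$ (by strict monotonicity of $U$ and $U<1$): $D_1\geq 0$ is immediate, and if $D_1=0$ then iterating the contour shift past the next zeros of $\Delta_+$, which by \lemref{tep1}(a) are nonreal and occur in conjugate pairs, would produce an oscillatory next-order contribution incompatible with $V>0$. The estimate \eqref{tezmt2} follows by the analogous argument with $W(\xi)=1+U(\xi)$, the operator $L_-$, and the residue at its positive real root $\lambda^u_->0$.

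The main technical obstacle is handling the convolution commutator cleanly (which is what makes the exponential moment assumption in (H1) indispensable) together with verifying the $L^1$ integrability of $\widehat{\tilde V}$ on vertical lines so that the contour shift and residue identification can be rigorously carried out; the positivity of $D_1$, while crucial for the theorem's form, is a soft consequence of the real nature of the slowest decaying mode afforded by \lemref{tep1}(a).
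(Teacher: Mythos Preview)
Your approach is essentially the paper's own: reduce to a constant-coefficient equation with a quadratically small source, take a bilateral transform, shift the contour across $\lambda^s_+$, and read off the residue. The variations---working with $1-U$ rather than $U'$, and introducing an explicit cutoff $\chi$ rather than exploiting two-sided decay of the source---are cosmetic and perfectly workable; your treatment of the convolution commutator via the exponential-moment assumption (H1) is correct.

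There is, however, a genuine gap in your positivity argument for $D_1$. You correctly observe that if $D_1=0$ one can try to push the contour further left, and that the next zeros of $\Delta_+$ are nonreal (by \lemref{tep1}(a)), so a nonvanishing residue there would force sign changes in $V$. But you do not exclude the possibility that \emph{every} residue vanishes, i.e.\ that the bootstrap never terminates and $V(\xi)=O(e^{-b\xi})$ for every $b>0$. Note also that each iteration of the contour shift requires first upgrading the decay of $V$ (hence of $R$, hence the analyticity strip of $\hat g$); you gesture at ``iterating'' but this bootstrap is not automatic from a single contour shift. The paper closes this gap with a separate argument: assuming $U'=O(e^{-b\xi})$ for all $b$, one multiplies the differential inequality $cV'-dV''\geq \underline a\,V+\underline b\,J*V$ (valid since $f_s\geq\underline b>0$ by (H3)) against $e^{b\xi}$ and integrates, obtaining $-cb-db^2-\underline b\int J(s)e^{bs}ds\geq\underline a$ for every $b>0$, which is impossible since the left side tends to $-\infty$. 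Without this (or an equivalent mechanism), your proof of $D_1>0$ is incomplete.
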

\begin{proof}  We first show that
\begin{equation}\label{tezha1}
|M_{+}(\xi)V(\xi)|\leq C_2e^{2(\lambda^{s}_{+}+\nu )\xi },\qquad \xi
\geq 0,
\end{equation}
\begin{equation}
|M_{-}(\xi)V(\xi)|\leq C_2e^{2(\lambda^{u}_{-}-\nu )\xi },\qquad \xi
\leq 0
\end{equation}
hold true for some constant $C_2>0$. In fact, by mean value theorem,
we have
\[
|M_{\pm}(\xi)V(\xi)|\leq
\widetilde{K}[|U(\xi)\mp1|+|J*(U\mp1)(\xi)|][|V(\xi)|+|J*V(\xi)|]
\]
for some positive constant $\widetilde{K}$.  It follows from
proposition \ref{pte3} that
\[
|U(\xi)-1|\leq (\lambda^{s}_{+}+\nu )^{-1}C_\nu
e^{(\lambda^{s}_{+}+\nu )\xi },\,\,\ \xi \geq 0\,\,\ \text{and}\,\
|U(\xi)+1|\leq (\lambda^{u} _{-}-\nu )^{-1}C_\nu e^{(\lambda^{u}
_{-}-\nu)\xi},\,\,\ \xi \leq 0.
\]
Hence, for any $\xi\geq 0$,
\begin{eqnarray*}
&&|J*(U-1)(\xi)| \\
&=&|\int_{\mathbb R}J(\eta )(U(\xi -\eta )-1)d\eta| \leq
\int_{-\infty }^\xi +\int_\xi ^\infty J(\eta
)|U(\xi -\eta )-1|d\eta \\
&\leq &\frac{C_\nu e^{(\lambda^{s}_{+}+\nu )\xi }}{(\lambda^{s}_{+}+\nu )}%
\int_{\mathbb R}J(\eta )e^{-(\lambda^{s}_{+}+\nu )\eta }d\eta
+e^{(\lambda^{s}_{+}+\nu)\xi }\int_{\mathbb R}J(\eta
)e^{-(\lambda^{s}_{+}+\nu )\eta }|U(\xi -\eta)-1|d\eta\\
&\leq& C'e^{(\lambda^{s}_{+}+\nu)\xi}.
\end{eqnarray*}
Similarly,
\[
|J*(U+1)(\xi)|\leq Ce^{(\lambda^{u}_{-}-\nu)\xi},\,\,\, \xi\leq 0,
\]
and
\[
|J*V(\xi)|\leq\left\{\begin{array}{ll}
     C_3 e^{(\lambda^{s}_{+}+\nu)\xi}, &\mbox{if $\xi\geq 0,$}\\
     C_3 e^{(\lambda^{u} _{-}-\nu)\xi}, &\mbox{if $\xi\leq 0,$}\end{array}\right.
\]
Therefore,(\ref{tezha1}) follows.

Now set $h_{\pm}(\xi)=-M_{\pm}(\xi)U(\xi)$. As long as $\nu $ is
sufficiently small, there exists $\iota >0$ such that
$2(\lambda^{s}_{+}+\nu )\leq \lambda^{s}_{+}-\iota$ and
$2(\lambda^{u} _{-}-\nu )\geq \lambda^{u}_{-}+\iota $. Therefore, we
have
\begin{equation}\label{atz1}
|h_+(\xi)|\leq C_2e^{(\lambda^{s}_{+}-\iota)\xi},\,\,\ \text{for
any}\,\ \xi\geq0,\,\,\ |h_-(\xi)|\leq C_2e^{(\lambda^{u}
_{-}+\iota)\xi},\,\,\ \text{for any}\,\xi\leq 0.
\end{equation}
In addition, due to the boundedness of $U\mp1$ and $J*(U\mp1)$, it
is easy to see that
\begin{equation}\label{atzsu1}
|h_+(\xi)|=O(e^{(\lambda^{u}_{-}-\nu)\xi}),\,\,\ \text{as}\,\
\xi\rightarrow-\infty,\,\,\ |h_-(\xi)|=O(e^{(\lambda^{s}
_{+}+\nu)\xi}),\,\,\ \text{as}\,\ \xi\rightarrow\infty.
\end{equation}

Clearly, we have
\begin{equation}
dV''-cV^{\prime}+L_{\pm}V=h_{\pm}(\xi).
\end{equation}
In particular, when $d=0$,
\begin{equation}\label{tezbb2}
|V'(\xi)|\leq|b^\pm||J*V(\xi)|+|a^\pm||V(\xi)|+|h_\pm(\xi)|.
\end{equation}
We also observe that $h_{\pm}$ is differentiable and
\[
|h'_{\pm}(\xi)|\leq
K'[|V(\xi)|^2+|V(\xi)||J*V(\xi)|+|U(\xi)\mp1||V'(\xi)|+|U(\xi)\mp1||J*V'(\xi)|].
\]
Therefore, it follows from (\ref{tezbb2}) that
\begin{equation}\label{tezbb3}
|h'_+(\xi)|\leq C_4e^{(\lambda^{s}_{+}-\iota)\xi},\,\,\ \text{for
any}\,\ \xi\geq0,\,\,\ |h'_-(\xi)|\leq C_4e^{(\lambda^{u}
_{-}+\iota)\xi},\,\,\ \text{for any}\,\xi\leq 0.
\end{equation}

Next, we show (\ref{tezmt1}). Thanks to (\ref{atz1}) and
(\ref{atzsu1}), $\widehat{h}_{+}(z)$ is analytic in the strip
$0\leq\text{Im}z\leq2\epsilon-\lambda^{s}_{+}$, where
$0<2\epsilon<\iota$ and
$\widehat{g}(z)=\frac{1}{2\pi}\int_{\BR}e^{-izs}g(s)ds$. In case
$d=0$, let $h^{\rho}_+(\xi)=h_+(\xi)e^{\rho\xi},$ where
$\rho\in(0,2\epsilon-\lambda^{s}_{+})$. Due to (\ref{tezbb3}), for
each $\rho\in(0,2\epsilon-\lambda^{s}_{+})$, $h^{\rho}_{+}\in
W^{1,p}(\mathbb R)$ for any $p\geq1$. Furthermore, we have
\[
|\eta||\widehat{h}_+(\eta+i\rho )|=|\eta
||\widehat{h^{\rho}_+}(\eta)|=|i\eta \widehat{h^{\rho}_+}(\eta
)|=|\widehat{\partial_{\eta}h^{\rho}_+}(\eta)|\leq
||\partial_{\eta}h^{\rho}_+ ||_{L^1},\quad \eta \in \BR.
\]
Therefore, in the strip $0\leq\text{Im}z\leq
2\epsilon-\lambda^{s}_{+},$
\[
|\widehat{h}_+(z)|=O(|\text{Re}z|^{-1}),\quad
|\text{Re}z|\rightarrow \infty.
\]
In the strip $|\text{Re}z|\leq D $ with any fixed $D>0$,  we have
that $\Delta_{L_+}(z)=O(|\text{Im}z|)$ ( $=O(|\text{Im}z|^2)$ if
$d>0$) uniformly  as $|\text{Im}z|\rightarrow \infty $ .
Consequently,
$\widehat{h}_{+}(-iz)\Delta^{-1}_{L_+}(z)=O(|\text{Im}z|)^{-2}$ for
any $z\in\mathbb{C}$ with
$0<\text{Re}z\leq\lambda^{s}_{+}-2\epsilon$ and $\Delta_{L_+}(z)\neq
0.$

Since $\Pi _{L_+}$ is an isomorphism, $V$ is the unique solution to
$dv''-cv'+L_{+}v=h_{+}$. By using Fourier transform and shifting the
integrating path, when $\xi\geq 0,$ we find
\begin{eqnarray*}
V&=&\frac{1}{2\pi}\int_{\mathbb
R}\frac{e^{i\eta\xi}\widehat{h}_{+}(\eta)}{\Delta_{+} (i\eta)}d\eta
=\frac{-i}{2\pi}\int_{\mathbb
R}\frac{e^{i\eta\xi}\widehat{h}_{+}(-i(i\eta))}{\Delta_{+}
(i\eta)}d(i\eta)\\
&=&\sum\text{Res}\frac{e^{z\xi}\widehat{h}_{+}(-iz)}{\Delta_{+}
(z)}|_{\lambda^{s}_{+}-\epsilon\leq\text{Re}z\leq
0}+\frac{-i}{2\pi}\int_{\text{Re}z= \lambda^{s}
_{+}-\epsilon}\frac{e^{z\xi
}\widehat{h}_{+}(-iz)}{\Delta_{L_+}(z)}dz.
\end{eqnarray*}
Here we choose $\epsilon$ such that $\Delta_{L_+}(\lambda^{s}
_{+}-\epsilon+i\eta)\neq 0$ for any $\eta\in\mathbb{R}.$ Clearly,
The last integral absolutely converges.

Let $\Upsilon _{\lambda _{+}^s-\epsilon }=\{z\in \mathbb
C|\Delta_{L^{+}}(z)=0,\lambda _{+}^s-\epsilon <\text{Re}z<0\}$.
Since $\widehat{h}(z)$ is analytic in the strip $0\leq
\text{Im}z\leq 2\epsilon -\lambda _{+}^s $, in the strip $\lambda
_{+}^s-2\epsilon\leq \text{Re}z\leq 0,$ $\widehat{h}(-iz)\Delta
_{L_{+}}^{-1}(z)$ is meromorphic and only has poles which may occur
at $z\in \Upsilon _{\lambda _{+}^s-\epsilon }$. We claim that
$\widehat{h}(iz)\Delta _{L_{+}}^{-1}(z)$ has a simple pole at
$z=\lambda _{+}^s$. Suppose this is not true. In virtue of Lemma
\ref{tep1}, in the strip $\lambda _{+}^s-\epsilon <\text{Re}z< 0$,
either all the poles of $\widehat{h}(-iz)\Delta _{L_{+}}^{-1}(z)$
occur at $z\in \Upsilon _{\lambda _{+}^s-\epsilon }$ with
$\text{Re}z<\lambda _{+}^s$  or $\widehat{h}(-iz)\Delta
_{L_{+}}^{-1}(z)$ is analytic . For the latter,
$V(\xi)=O(e^{(\lambda _{+}^s-\epsilon )\xi }),$ as $\xi \rightarrow
\infty $. Certainly, $h_{+}(\xi)=O(e^{2(\lambda _{+}^s-\epsilon )\xi
})$ and $\widehat{h}(iz)$ is analytic in the strip $2\lambda
_{+}^s-\varkappa $ for some $0<\varkappa \leq 2\epsilon.$ Hence the
path of integration can be shifted to the line $\text{Re}z=2\lambda
_{+}^s-\varkappa.$ Consequently, one of the following cases must
occur.

Case I The set $Z$ is not empty, where
\[
Z=\{\text{Re}z\in \mathbb{R^{-}}|\Delta_{L_+}(z)=0,\
\widehat{h}(-iz)\,\ \text{is analytic at}\,\ z\,\text{and}\,\
\widehat{h}(-iz)\Delta_{L_{+}}^{-1}(z)\,\, \text{has poles at}\,\
z\}.
\]

Case II   $V(\xi )=O(e^{-b\xi})$ for any $b\in \BR^{+}$.

Next we show that both case I and II are impossible. We start with
case (I). Let $\varrho=\sup Z$.  Recall that $\lambda _{+}^s$ is the
only real zero of $\Delta _{L_{+}}$ in the half plane
$\text{Re}z\leq 0$. By lemma\ref{tep1}, we may assume
$\varrho\pm\mu_{m}i$ with $\mu_m>0 (1\leq m<\infty)$ are all the
element of $Z$ with real part equal to $\varrho$. Suppose that
$\widehat{h}(-iz)\Delta_{L_{+}}^{-1}(z)$ has a pole of order $l_m+1$
at $\varrho+\mu_{m}i$. Then
\begin{eqnarray*}
V(\xi) &=&\sum\text{Res} (e^{\xi z}\widehat{h}(iz)\Delta
_{L_{+}}^{-1}(z))_{\text{Re}z=\varrho}+
o(e^{\varrho\xi}) \\
&=&\sum_{m}
p_{l_m}(\xi)e^{\varrho\xi}\cos(\mu_m\xi+k_m)+o(e^{\varrho\xi}),
\end{eqnarray*}
where $p_{l_m}$ are real polynomials and $k_m\in\BR$. Thus,
$V(\xi)=\xi^{N}e^{\varrho\xi}(q(\xi)+O(\xi^{-1}))$ as
$\xi\rightarrow\infty$ for some $N>0$, where $q$ is a quasiperiodic
function of mean value zero. According to \cite{Mall1},
\[
\liminf_{\xi\rightarrow\infty}q(\xi)<0<\limsup_{\xi\rightarrow\infty}q(\xi).
\]
Consequently, $V(\xi_1)<0$ for some $\xi_1>0$. This contradicts the
fact that $V(\xi)>0$ for any $\xi\in(-\infty ,\infty).$ Therefore
case(I) never occurs.

For the case II, we define
\[
V_b=\int_\BR V(\xi )e^{b\xi },\quad b\in\BR^{+}.
\]
Note that
\[
\int_{\BR} e^{b\xi}J*V(\xi)d\xi =\int_{\BR} e^{b\eta }V(\eta
)\int_{\BR} e^{b(\xi -\eta )}J(\xi -\eta )d\xi d\eta =V_b\int_{\BR}
e^{b\xi }J(\xi )d\xi .
\]
Let
\[
\underline{a}=\min_{[-1,1]\times[-1,1]}f_r(r,s),\,\
\underline{b}=\min_{[-1,1]\times[-1,1]}f_s(r,s).
\]
Due to (H2) and (H3), $\underline{a}>-\infty$ and $\underline{b}>0$.
Consequently,
\begin{equation}\label{atz4}
cV'-dV''\geq \underline{a}V+\underline{b}J*V
\end{equation}
Multiplying each side of (\ref{atz4}) by $e^{b\xi}$ and integrating
by part yield
\[
(-cb-db^2-\int_{\BR}J(\xi)e^{b\xi}d\xi)V_b\geq\underline{a}V_b,
\]
thus
\[
-cb-db^2-\underline{b}\int_{\BR}J(\xi)e^{b\xi}d\xi\geq\underline{a}.
\]
Since $-cb-db^2-\int_{\BR}J(\xi)e^{b\xi}d\xi\rightarrow-\infty $, as
$b\rightarrow \infty ,$ we arrive at a contradiction. This implies
that Case (II) can not occur. Therefore,
$e^{z\xi}\widehat{h}(-iz)\Delta _{L_{+}}^{-1}(z)$ has a simple pole
at $z=\lambda _{+}^s$, and
\begin{eqnarray*}
V(\xi)&=&\frac{e^{\lambda^{s}_{+}\xi}\widehat{h}_{+}(-i\lambda^{s}_{+})}
{\Delta'_{L_+}(\lambda^{s}_{+})}
+\frac{1}{2\pi}\int_{\text{Re}z=\lambda^{s}_{+}-\epsilon}\frac{e^{z\xi
}\widehat{h}_{+}(-iz)}{\Delta_{L_+}(z)}dz\\
&=&\frac{e^{\lambda^{s}_{+}\xi}\int_{\BR}h(\eta)e^{-\lambda^{s}_{+}\eta}d\eta}{
\int_{\BR}\eta J(\eta)e^{-\lambda^{s}_{+}\eta}d\eta-c}
+\frac{e^{(\lambda^{s}_{+}-\ep)\xi}}{2\pi}\int_{\BR}\frac{e^{i\xi
s}\widehat{h}_{+}(\eta+i(\ep-\lambda^{s}_{+}))}{\Delta_{L_+}(\lambda^{s}_{+}-\ep+i\eta)}d\eta.
\end{eqnarray*}
Now let
\[
\gamma^{+} =\frac{\int_{\BR}h(\eta)e^{-\lambda^{s}_{+}\eta}d\eta}
{\int_{\BR}\eta J(\eta)e^{-\lambda^{s}_{+}\eta}d\eta-c} ,\,\,\,\
V^{+}(\xi)=\frac{e^{(\lambda^{s}_{+}-\ep)\xi}}{2\pi}\int_{\BR}\frac{e^{i\xi
s}\widehat{h}_{+}(\eta+i(\ep-\lambda^{s}_{+}))}{\Delta_{L_+}(\lambda^{s}_{+}-\ep+i\eta)}d\eta.
\]
Clearly, $V^{+}(\xi)=o(e^{\lambda^{s}_{+}\xi}),\,\, \text{as}\,\
\xi\rightarrow +\infty$. The positivity of $V$ forces that
$\gamma^{+}>0$. Thus,
\[
U'(\xi)=\gamma^{+}e^{\lambda^{s}_{+}\xi}+o(e^{\lambda^{s}_{+}\xi}),\,\,\
\xi\rightarrow +\infty.
\]
By considering the equation $cV'-dV''=L_{-}V+h_{-}$ and arguing
analogously, we may find
\[
U'(\xi)=\gamma^{-}e^{\lambda^u_{-}\xi}+o(e^{\lambda^u_{-}\xi}),\,\,\
\xi\rightarrow -\infty
\]
for some constant $\gamma^{-}>0$. With the boundary conditions
$U(\pm\infty)=\pm 1$, we are readily to obtain the desired
conclusions.
\end{proof}

The uniqueness of monotone traveling wave $U$ for nonlocal
Allen-Cahn equation (\ref{te1}) with $d=0$ were established in
\cite{bates3} and late for the general equation (\ref{te1}) in
\cite{chen1}. In those works, the uniqueness of speed and profile of
traveling wave solution $U$ are obtained by means of a comparsion
principle and sub- and super solution techniques. Here we provided a
technically different and simplified proof for the uniqueness of
$U$.
\begin{corollary}
Assume that (H1)-(H5) are satisfied. Then there exists a unique
$c^{*}\in\BR$ such that equation (\ref{te1}) possesses a solution
satisfying (\ref{te3}) if and only if $c=c^{*}$ and the traveling
wave solution $U$ is unique up to translation of $\xi.$
\end{corollary}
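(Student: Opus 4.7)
The plan is to deduce the corollary from the sharp asymptotic expansions of Theorem~\ref{ttz3}, combined with a sliding/comparison argument. Existence of at least one pair $(c^{*},U)$ is already known under (H1)--(H5); the work lies entirely in uniqueness of $c$ and of the profile up to translation.

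\textbf{Step 1 (uniqueness of the speed).} Suppose $(c_{1},U_{1})$ and $(c_{2},U_{2})$ both solve (\ref{te3}) with $c_{1}<c_{2}$. Because $U_{i}(\pm\infty)=\pm 1$ and each $U_{i}$ is monotone increasing, I can pick a translation $\tau_{0}$ large enough so that $U_{1}(\xi)<U_{2}(\xi+\tau_{0})$ for every $\xi\in\BR$. Viewing $u_{1}(x,t):=U_{1}(x+c_{1}t)$ and $u_{2}(x,t):=U_{2}(x+c_{2}t+\tau_{0})$ as classical solutions of (\ref{te1}), the comparison principle for the nonlocal parabolic equation (which is available in the bistable regime under (H3)--(H5)) preserves this order in time. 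Since $c_{2}>c_{1}$, the profile $u_{2}$ outruns $u_{1}$ in any fixed moving frame, forcing a contradiction with the common limits at $\pm\infty$. Thus $c_{1}=c_{2}=:c^{*}$.

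\textbf{Step 2 (uniqueness of the profile).} Fix $c=c^{*}$ and let $U_{1},U_{2}$ be two monotone profiles. By Theorem~\ref{ttz3} there are positive constants $D_{1}^{(i)},D_{2}^{(i)}$ such that
\[
1-U_{i}(\xi)=D_{1}^{(i)}e^{\lambda^{s}_{+}\xi}[1+o(1)] \quad (\xi\to+\infty),
\qquad
1+U_{i}(\xi)=D_{2}^{(i)}e^{\lambda^{u}_{-}\xi}[1+o(1)]\quad (\xi\to-\infty).
\]
Set $U_{2}^{\tau}(\xi):=U_{2}(\xi+\tau)$ and define
\[
\tau^{*}:=\inf\{\tau\in\BR:U_{2}^{\tau}(\xi)\geq U_{1}(\xi)\text{ for all }\xi\in\BR\}.
\]
The sharp decay in both tails, together with monotonicity, makes $\tau^{*}$ well defined and finite. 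At $\tau=\tau^{*}$, either (a) there is an interior contact point $\xi_{0}\in\BR$ with $U_{2}^{\tau^{*}}(\xi_{0})=U_{1}(\xi_{0})$, or (b) the contact is only asymptotic, at $\xi\to+\infty$ or $\xi\to-\infty$. In case (a), the nonnegative difference $W:=U_{2}^{\tau^{*}}-U_{1}$ satisfies a linear equation
\[
cW'=dW''+A(\xi)W+B(\xi)J\ast W
\]
with $B(\xi)>0$ by (H3); positivity of $J$ then propagates the zero from $\xi_{0}$ across $\BR$ via a nonlocal strong maximum principle, giving $W\equiv 0$. In case (b), the asymptotic expansions force a single choice of $\tau^{*}$ that aligns the leading coefficients $D_{1}^{(1)}=D_{1}^{(2)}e^{\lambda^{s}_{+}\tau^{*}}$ (resp.\ at $-\infty$); consistency of this shift at both ends, combined with the sliding minimality, forces $U_{1}\equiv U_{2}^{\tau^{*}}$.

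\textbf{Main obstacle.} The delicate point is case (a) of the sliding: the strong maximum principle for the nonlocal operator $dv''-cv'+A v+BJ\ast v$ must be invoked in a form that uses $B>0$ and $\mathrm{supp}\,J\ni 0$ to conclude that a zero of $W\geq 0$ propagates through the connected region covered by iterated $J$-convolutions, and hence through all of $\BR$. The degenerate case $d=0$ requires extra care because the diffusive smoothing is absent, but the positivity of $J\ast W$ near the contact point still suffices to force $W$ to vanish in a neighborhood, and a standard continuation argument along $\BR$ then completes the identification $U_{1}\equiv U_{2}^{\tau^{*}}$.
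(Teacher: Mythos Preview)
Your Step~1 has a real gap, and it also departs from the paper's argument. With $c_{1}<c_{2}$ and $U_{1}(\xi)<U_{2}(\xi+\tau_{0})$, the comparison principle gives $U_{1}(x+c_{1}t)\le U_{2}(x+c_{2}t+\tau_{0})$ for $t\ge 0$; but here the \emph{faster} wave sits above the slower one, so in any moving frame the limit $t\to\infty$ only yields trivialities such as $U_{1}(\xi)\le 1$ or $-1\le U_{2}(\xi+\tau_{0})$. No contradiction follows. The trapping argument you have in mind requires the \emph{slower} wave to be placed above the faster one, so that the faster wave is dragged down to $-1$. As written, Step~1 proves nothing.

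More importantly, the paper's proof is static and never appeals to the time-dependent comparison principle (which it does not state or prove, and which is delicate when $d=0$). The key point you are missing is that the decay exponents in Theorem~\ref{ttz3} depend on $c$: they are the real roots of $\Delta_{L_{\pm}}(z)=dz^{2}-cz+a^{\pm}+b^{\pm}\int_{\BR}J(s)e^{-zs}\,ds$, and the convexity analysis in Lemma~\ref{tep1} shows that if $c_{1}<c_{2}$ then $\varrho_{1}^{s}<\varrho_{2}^{s}<0<\varrho_{1}^{u}<\varrho_{2}^{u}$. Thus $U_{2}$ decays strictly faster than $U_{1}$ at both ends, so after a translation one has $U_{2}\le U_{1}$ everywhere with a contact point $\xi_{0}$. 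There $U_{1}'(\xi_{0})=U_{2}'(\xi_{0})>0$, $U_{2}''(\xi_{0})\le U_{1}''(\xi_{0})$, $J\ast U_{2}(\xi_{0})\le J\ast U_{1}(\xi_{0})$, and plugging into (\ref{te3}) gives
\[
0=dU_{1}''-c_{1}U_{1}'+f(U_{1},J\ast U_{1})>dU_{2}''-c_{2}U_{2}'+f(U_{2},J\ast U_{2})=0
\]
at $\xi_{0}$, the strict inequality coming solely from $c_{1}<c_{2}$. This is the ``simplified proof'' the paper advertises: it uses only the asymptotics just established, with no PDE comparison. Your Step~2 sliding argument is in the right spirit for the equal-speed case, and in fact supplies detail that the paper's own proof treats rather lightly.
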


\begin{proof} We shall argue by contradiction. Suppose that there exist
$(c_i,U_i)$ satisfying (\ref{te1}) with $c_1<c_2$ , $i=1,2.$ We may
assume that one of these solutions has speed $c^{*}$. By Theorem
\ref{ttz3}, both solutions satisfy

\[
U_i(\xi )=\left\{
\begin{array}{ll}
-1+n_{i}e^{\varrho^{u}_{i}\xi}+o(e^{\varrho^{u}_{i}\xi}), & \xi
\rightarrow -\infty ,
\\
1-\widetilde{n}_{i}e^{\varrho^{s}_{i}\xi}+o(e^{\varrho^{s}_{i}\xi}),
& \xi \rightarrow \infty
\end{array}
\right.
\]
for some $n_i,\widetilde{n}_i>0$ and
$\varrho^{u}_{i}>0,\varrho^{s}_{i}<0$. Furthermore,
$\varrho^{u}_{i}$ and $\varrho^{s}_{i}$ satisfy
\begin{eqnarray*}
c_i\varrho^{s}_{i}-d(\varrho^{s}_{i})^2-a^{+}-b^{+}\int_{\BR}J(s)e^{-\varrho^{s}_{i}s}ds &=&0, \\
c_i\varrho^{u}_{i}-d(\varrho^{u}_{i})^2-a^{-}-b^{-}\int_{\BR}J(s)e^{-\varrho^{u}_{i}s}ds
&=&0.
\end{eqnarray*}
In view of the proof of lemma \ref{tep1}, it is easy to see that
\[
\varrho^{s}_{1}<\varrho^{s}_{2}<0,\quad
0<\varrho^{u}_{1}<\varrho^{u}_{2}.
\]
Thus, $U_2(\xi )<U_1(\xi)$ for all sufficiently large $|\xi|.$ This
together with the monotonicity of $U_i$ justify that we can choose
$\tau\in\BR$ and replace $U_2(\xi )$ by $U_2(\xi +\tau )$ such that
$U_2(\xi)\leq U_1(\xi)$ for all $\xi \in\BR$ and $U_2(\xi
^{*})=U_1(\xi_{0})$ for some $\xi_{0}$. Consequently,
$U_2^{\prime}(\xi_{0})=U_1^{\prime}(\xi_{0})$ and $U_2^{\prime
\prime}(\xi_{0})\leq U_1^{\prime \prime }(\xi_{0}) $. Moreover, (H2)
and the fact that $J*U_1(\xi_{0})\geq J*U_2(\xi_{0})$ imply that
$f(U_1(\xi_{0}),J*U_1(\xi_{0}))\geq f(U_2(\xi_{0}),J*U_2(\xi
_{0})).$ By plugging these relations into (\ref{te1}), we find
\[
0=dU_1^{\prime \prime}(\xi_{0})-c_1U_1^{\prime }(\xi_{0})+f(U_1(\xi
^{*}),J*U_1(\xi_{0}))>dU_2^{\prime \prime}(\xi
_0)-c_2U_2^{\prime}(\xi_{0})+f(U_2(\xi_{0}),J*U_2(\xi_{0}))=0.
\]
The contradiction completes the proof.
\end{proof}

\section{Spectral analysis of traveling wave $U$}
In this section, we study the spectrum of the operator $\Pi_L$.
Recall that
\begin{equation}\label{tezspa1}
(\Pi_Lu)(\xi):=du''(\xi)-cu'(\xi)+f_r(U,J*U)u(\xi)+f_s(U,J*U)(J*u)(\xi)
\end{equation}
and
\begin{equation}
(\Pi_{L_\pm}u)(\xi):=du''(\xi)-cu'(\xi)+a^{\pm}u(\xi)+b^{\pm}(J*u)(\xi).
\end{equation}

Clearly, the equation $(\Pi_L-\lambda I)u=0$ is equivalent to
\begin{equation}\label{te20}
du''(\xi)-cu'(\xi)+L(\xi )u(\xi)-\lambda u(\xi)=0.
\end{equation}

Let
\[
L^{*}(\xi)v(\xi)=f_r(U(\xi),J*U(\xi))v(\xi)+\int_{\BR}J(\xi-\eta)f_s(U(\eta),J*U(\eta))v(\eta)d\eta.
\]
The adjoint equation of (\ref{te20}) is the equation
\begin{equation}
dv''(\xi)+cv'(\xi)+L(\xi)v(\xi)-\overline{\lambda }v(\xi)=0,
\end{equation}
where $\overline{\lambda }$ denotes the conjugate of $\lambda.$ We
define the formally adjoint operator $(\Pi_L-\lambda I)^{*}$ of
$(\Pi_L-\lambda I)$ to be
\[
((\Pi_L-\lambda I)^{*}v)(\xi )=dv''(\xi)+cv'(\xi)+L^{*}(\xi)v(\xi
)-\overline{\lambda}v(\xi).
\]
It is easy to see that
\begin{equation}\label{tezpe}
\int_{\mathbb R}\overline{v(\xi)}((\Pi_L-\lambda I)u)(\xi)d\xi
=\int_{\mathbb R}\overline{((\Pi_L-\lambda I)^{*}v)(\xi)}u(\xi)d\xi,
\end{equation}
and
\begin{equation}\label{tezpe2}
(\Pi_L-\lambda I)^{*}=\Pi_L^{*}-\overline{\lambda}I,
\end{equation}
where $u\in W^{1,p}$, $v\in W^{1,q}$ and $\frac 1p+\frac 1q=1.$

Similarly, the formally adjoint operators $(\Pi_{L_\pm}-\lambda
I)^{*}$ of $\Pi_{L_\pm}-\lambda I$ are defined by
\begin{equation}\label{tezpe3}
((\Pi_{L_\pm}-\lambda I)^{*}v)(\xi )=dv''(\xi)+cv'(\xi)+L_{\pm}v(\xi
)-\overline{\lambda }v(\xi).
\end{equation}

Throughout the rest of the paper, we let $X:=
L^{p}(\mathbb{R},\mathbb{C}),\,\ 1\leq p\leq \infty$. $\Re X$ is
considered as an ordered Banach space with a positive cone $X_{+}$,
where $\Re X=\{\text{Re}u|u\in X\}$ and $X_{+}=\left\{w\in\Re X
|w\geq 0\right\}$. It is well known that $X_{+}$ is generating,
normal,(see \cite{s1} for more details). For $\varphi\in\Re X$, we
write $\varphi\gneq 0$ if $\varphi\in X_{+}$ and $\varphi\neq 0$,
$\varphi\gg 0$ if $\varphi(\xi)>0$ for each $\xi\in\BR$. An operator
$A:X\rightarrow X$ is called positive if $AX_{+}\subseteq X_{+}$.

\begin{definition}
An operator $A$ is said to be resolvent positive if the resolvent
set $\rho(A)$ of $A$ contains an interval $(\alpha,\infty)$ and
$(\lambda I-A)^{-1}$ is positive for sufficiently large
$\lambda\in\rho(A) \cap \mathbb R$.
\end{definition}

In sequel, we follow \cite{Hen} to define the normal points and the
essential spectrum of an operator $A$ on a Banach space. Namely, a
normal point of $A$ is a complex number in the resolvent set
$\rho(A)$ or an isolated eigenvalue of $A$ with finite multiplicity.
The complement of the set of normal points is called the essential
spectrum of $A$ denoted by $\sigma_{ess}(A)$. We denote the spectral
bound of an operator $A$ by
\[
\frak{s}(A)=\sup\{\text{Re}\lambda: \lambda\in\sigma(A)\}.
\]
We also let $\overline{\iota}=\max\{a^{+}+b^{+},a^{-}+b^{-}\}$ and
$\underline{\iota } =\min \{a^{+}-b^{+},a^{-}-b^{-}\},$ where
$a^{\pm}=f_r(\pm1,\pm1), b^{\pm}=f_s(\pm1,\pm1)$.

\begin{theorem}
Consider the linear operator $\Pi_L$ $:L^p\rightarrow L^p$ defined
by (\ref{tezspa1}), which corresponds to the variational equation of
(1.1) at $U$, that is
\[
(\Pi_Lu)(\xi )=du^{\prime \prime }(\xi )-cu^{\prime
}(\xi)+f_r(U,J*U)u(\xi )+f_s(U,J*U)(J*u)(\xi )
\]
and its formally adjoint operator $\Pi_L^{*}:L^q\rightarrow L^q$
defined by
\[
(\Pi_L^{*}u)(\xi )=du^{\prime \prime }(\xi )+cu^{\prime}(\xi
)+f_r(U,J*U)u(\xi )+(J*f_s(U,J*U)u)(\xi),
\]
where $1\leq p\leq \infty ,\frac 1p+\frac 1q=1.$ $q=\infty$ if
$p=1,$ and $q=1$ if $p=\infty$.  Then

Case $d=0$.

(i) Let $\Omega_{+}=\{\lambda\in\mathbb
C|\text{Re}\lambda>\overline{\iota}\}$ and
$\Omega_{-}=\{\lambda\in\mathbb
C|\text{Re}\lambda<\underline{\iota}\}$. $\lambda $ is a isolated
eigenvalue with finite algebraic multiplicity if $\lambda
\in(\Omega_{+}\cup\Omega_{-})\cap\sigma(\Pi_L)$, Furthermore,
suppose $\psi$ is an eigenfunction corresponding to $\lambda$, then
\begin{equation}\label{stz4}
|\psi(\xi)|\leq C_{\lambda}e^{-\mu|\xi|},\,\,\ \xi\in\mathbb R
\end{equation}
for some positive constants $C_{\lambda}$ and $\mu$.

(ii) $\sigma _{ess}(\Pi_L)\subseteq \{\lambda \in \mathbb
C|\underline{\iota }\leq \text{Re}\lambda \leq \overline{\iota }\}$.

(iii) $\frak{s}(\Pi_L)=\frak{s}(\Pi^{*}_L)=0$ and $0$ is a simple
eigenvalue for both $\Pi_L$ and $\Pi_L^{*}.$

(iv) $\dim \mathcal{N}(\Pi_L)=\dim
\mathcal{N}(\Pi_L^{*})=\text{codim}\mathcal{R}(\Pi_L)=
\text{codim}\mathcal{R}(\Pi_L^{*})=1$. Moreover, $\Pi_L^{*}$ has a
positive eigenfunction $\Psi$ corresponding to the eigenvalue $0$ ,
and
\begin{equation}\label{stz5}
\mathcal{R}(\Pi_L)=\{h\in L^p|\int_{\BR}\Psi(\xi)h(\xi)d\xi =0\}.
\end{equation}

(v) There exist $\varpi >0$ such that the set $\{\lambda \in
\mathbb{C}|\text{Re}\lambda <-\varpi $, or $\text{Re}\lambda
\geq0\,\ \text{and}\,\ \lambda\neq 0\}\subset \rho (\Pi_L)$, where
$\rho (\Pi_L)$ denotes the resolvent set of $\Pi_L.$

Case $d>0.$

(i) Let
$\Xi=\{\lambda\in\mathbb{C}||\text{Im}\lambda|>\sqrt{\overline{\iota}-\text{Re}\lambda}+(b^{-}\wedge
b^{+}) ,\text{Re}\lambda\leq\overline{\iota}\}\cup
\{\text{Re}\lambda>\overline{\iota}\}$. Suppose that
$\lambda\in\Xi\cap\sigma(\Pi_L)$, then $\lambda $ is a isolated
eigenvalue with finite algebraic multiplicity. Furthermore, if
$\psi$ is an eigenfunction corresponding to $\lambda$ then
\[
|\psi(\xi)|\leq C_{\lambda}e^{-\mu|\xi|},\,\,\ \xi\in\BR,
\]
where $C_{\lambda}$ and $\mu$ are positive constants.

(ii) $\sigma _{ess}(\Pi_L)\subseteq\mathbb{C}\setminus\Xi$.

The assertions (iii) and (iv) stated above remain true.

(v)the set $\{\lambda \in \mathbb{C}|\text{Re}\lambda \geq0\,\
\text{and}\,\ \lambda\neq 0\}\subset \rho (\Pi_L)$,
\end{theorem}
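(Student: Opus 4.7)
The plan is to reduce the spectral statement to Fredholm theory for asymptotically hyperbolic operators, using Proposition~\ref{tezpp1} and the Green's function bounds of Proposition~\ref{pte2}, and then to exploit the positivity of the translation eigenfunction $U'$ to pin down $0$ as the simple principal eigenvalue. The first task is to locate the essential spectrum: the characteristic function of $\Pi_{L_\pm}-\lambda I$ is $\Delta_{L_\pm}(z)-\lambda$, and substituting $z=i\eta$ together with the evenness of $J$ yields
\[
\text{Re}\,\Delta_{L_\pm}(i\eta)=-d\eta^{2}+a^{\pm}+b^{\pm}\!\int_{\BR}\! J(s)\cos(\eta s)\,ds,\qquad \text{Im}\,\Delta_{L_\pm}(i\eta)=-c\eta.
\]
Since $\bigl|\int J(s)\cos(\eta s)\,ds\bigr|\leq 1$ and $b^{\pm}>0$, when $d=0$ the real part lies in $[a^{\pm}-b^{\pm},a^{\pm}+b^{\pm}]\subseteq[\underline{\iota},\overline{\iota}]$, so $\Delta_{L_\pm}(i\eta)=\lambda$ has no real solution whenever $\text{Re}\,\lambda\notin[\underline{\iota},\overline{\iota}]$; for $d>0$ the extra $-d\eta^{2}$ produces precisely the parabolic exclusion that defines $\Xi$. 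In these regions Lemma~\ref{tte1} shows that both $\Pi_{L_\pm}-\lambda I$ are isomorphisms, i.e.\ $\Pi_L-\lambda I$ is asymptotically hyperbolic.

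The second step is to apply Proposition~\ref{tezpp1} to $\Pi_L-\lambda I$ and to its formal adjoint (also asymptotically hyperbolic, by (\ref{tezpe3})) to conclude the Fredholm property with finite-dimensional kernel and cokernel. By Riesz projection, every $\lambda$ in the spectrum belonging to this region is then an isolated eigenvalue of finite algebraic multiplicity, giving (i) and (ii) in both cases. The exponential decay (\ref{stz4}) of any such eigenfunction $\psi$ follows by rewriting, on each half-line $|\xi|\geq\tau$, the equation $(\Pi_{L_\pm}-\lambda I)\psi=-M_{\pm}(\xi)\psi$ with $M_{\pm}$ a small asymptotic perturbation, and invoking the exponentially-decaying Green's function bound (\ref{teaa1}) exactly as in the proof of Proposition~\ref{pte3}. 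A homotopy-of-index argument anchored at $\text{Re}\,\lambda\to+\infty$, where $\Pi_L-\lambda I$ is invertible by the standard semigroup estimate, then gives Fredholm index $0$ throughout the connected component of the asymptotically hyperbolic region containing $+\infty$, which will be used in (v).

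For (iii)-(iv), differentiating (\ref{te3}) yields $\Pi_L U'=0$ with $U'\gg 0$, so $0\in\sigma(\Pi_L)$. I would apply the Krein-Rutman theorem to the shifted resolvent $(\omega I-\Pi_L)^{-1}$ for $\omega$ large (it exists by the second step, is positive because $J\geq 0$ and $\partial_{s}f>0$, and is compact on a suitably weighted $L^{p}$-subspace by virtue of the exponentially decaying kernel of Proposition~\ref{pte2}); this produces a unique-up-to-scalar strictly positive eigenfunction at $\frak{s}(\Pi_L)$, which by uniqueness must coincide with $U'$, forcing $\frak{s}(\Pi_L)=0$. The same argument applied to $\Pi_L^{*}$ delivers a strictly positive $\Psi$ with $\Pi_L^{*}\Psi=0$. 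Algebraic simplicity of $0$ then follows from the standard obstruction: if $\Pi_L v=U'$ were solvable, then $0=\langle v,\Pi_L^{*}\Psi\rangle=\langle U',\Psi\rangle>0$, impossible. The Fredholm alternative produces the dimension and codimension counts in (iv) and the range characterization (\ref{stz5}).

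Finally, for (v), by (H4) we have $\overline{\iota}<0$, so the region $\{\text{Re}\,\lambda\geq 0,\,\lambda\neq 0\}$ lies inside the asymptotically hyperbolic set and is connected to $+\infty$; Fredholm index $0$ together with the Krein-Rutman simplicity of $U'$ rules out any nonzero eigenvalue there, giving invertibility. When $d=0$, the additional strip $\text{Re}\,\lambda<-\varpi$ follows because Lemma~\ref{tep1} and the Fredholm property confine the point spectrum of $\Pi_L$ in the asymptotically hyperbolic region to a discrete set, with standard dissipativity precluding accumulation at $-\infty$, so $\varpi$ may be chosen past the leftmost point of the spectrum. The step I expect to be the main obstacle is the construction of $\Psi$ and the simplicity argument in the third paragraph: the resolvent of $\Pi_L$ is not compact on all of $L^{p}$, so the Krein-Rutman framework must be set up on a weighted invariant subspace in which Proposition~\ref{pte2} supplies compactness, and the irreducibility of the nonlocal operator required for strict positivity of the principal eigenfunction has to be verified with care. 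The remaining parts follow more mechanically from the Fourier and Green's-function machinery developed in Section~2.
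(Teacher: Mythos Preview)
Your treatment of (i)--(ii) matches the paper closely: locate the asymptotically hyperbolic region via the characteristic function, apply the Fredholm machinery of the appendix (Proposition~\ref{tezpp1}, Lemma~\ref{atel1}), and read off eigenfunction decay from the Green's function bounds. One small ordering issue: for $d=0$ the regions $\Omega_+$ and $\Omega_-$ are disconnected, so the index-zero homotopy anchored only at $+\infty$ does not reach $\Omega_-$; the paper first establishes $\rho(\Pi_L)\cap\Omega_-\neq\emptyset$ directly (Neumann series for $(\lambda I+c\partial)^{-1}$ when $\text{Re}\,\lambda\ll 0$) and then invokes Kato's stability of the index on each component.

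For (iii)--(iv) your route diverges from the paper's, and your self-identified obstacle is exactly where the paper chooses a different tool. Rather than forcing Krein--Rutman through a weighted-space compactness argument, the paper shows resolvent positivity of $\Pi_L$ by a Neumann expansion and then invokes Thieme's theorem on resolvent-positive operators to conclude $\frak{s}(\Pi_L)\in\sigma(\Pi_L)$ with a positive eigenfunction for $\Pi_L^{*}$ at $\frak{s}(\Pi_L^{*})$; no compactness is needed. The identity $\frak{s}(\Pi_L^{*})=0$ then follows from your pairing argument with $U'$. For geometric simplicity of the kernel the paper does not rely on Krein--Rutman uniqueness either, but runs a direct sliding/maximum-principle argument: if $\psi\notin\text{span}\{U'\}$, set $\overline{w}=\overline{t}U'+\psi\geq 0$ with a touching point and derive a contradiction from the integral representation and the support of $J$.

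There is a genuine gap in your argument for (v). Even granting $\frak{s}(\Pi_L)=0$ and Fredholm index zero on the imaginary axis, Krein--Rutman applied to $(\omega I-\Pi_L)^{-1}$ only excludes eigenvalues of $\Pi_L$ on the circle $|\omega-\lambda|=\omega$, not on the whole line $\text{Re}\,\lambda=0$; purely imaginary $i\eta$ with $\eta\neq 0$ lie strictly inside that circle and are untouched. The paper closes this gap by a separate argument (citing \cite{bates2} and \cite{Volp}) showing directly that $\Pi_L-i\eta I$ is injective for every $\eta\in\BR$, which together with index zero gives $i\eta\in\rho(\Pi_L)$. You will need an analogous injectivity step.
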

\begin{proof}
We shall first prove that $\Pi_L$ is resolvent positive. The proof
for $\Pi_L^{*}$ is same. Let $\widetilde{\lambda}>0$ be sufficiently
large and write $\widetilde{\lambda }=\lambda
^{*}-\underline{\lambda }$ such that $\max_{0\leq U\leq
1}|f_r(U,J*U)|\leq \underline{\lambda }<\infty .$ Then we have
$(\widetilde{\lambda }I-\Pi_L)=(\lambda
^{*}I+c\partial-d\partial^2)-(\underline{\lambda }I+L)$ , where
$(Lv)(\xi):=f_r(U,J*U)v(\xi )+f_s(U,J*U)(J*v)$ and $\partial$
denotes differentiation. According to \cite{Mik}(see section 1.6),
As long as $\lambda ^{*}$ is sufficiently large, $(\lambda
^{*}I+c\partial-d\partial^2)$ is invertible and positive. In
particular, $||(\lambda ^{*}I+c\partial-d\partial^2)^{-1}||\leq
|\frac {k}{\lambda ^{*}}|$ for some $k>0$ provided $d>0$ ,and
$||(\lambda ^{*}I+c\partial)^{-1}||\leq |\frac{c}{\lambda ^{*}}|$.
Since $L$ is a bounded operator, $\widetilde{\lambda }I-\Pi_L$ is
invertible provided $\widetilde{\lambda}$ is sufficiently large.
Therefore, we have
\begin{eqnarray*}
(\widetilde{\lambda }I-\Pi_L)^{-1}
&=&[(\lambda ^{*}I+c\partial-d\partial^2)-(\underline{\lambda }I+L)]^{-1} \\
&=&(\lambda ^{*}I+c\partial-d\partial^2)^{-1}[I-(\underline{\lambda
}I+L)(\lambda
^{*}I+c\partial-d\partial^2)^{-1}] \\
&=&(\lambda ^{*}I+c\partial-d\partial^2)^{-1}\sum_{j=0}^\infty [(\underline{\lambda }%
I+L)(\lambda ^{*}I+c\partial-d\partial^2)^{-1}]^j.
\end{eqnarray*}
Note that $(\underline{\lambda }I+L)$ is positive. So the above
Neumann series is a sum of positive operators and hence is positive.
Clearly, for any $\lambda>\widetilde{\lambda}$, $(\lambda I-\Pi_L)$
is invertible and positive. Hence $\Pi_L$ is resolvent positive.

Next, we prove the statements (i)-(v) for the case that $d=0$. The
proof for case that $d>0$ follows the same lines and shall be
omitted.

By \cite{Mik} again, there exists $\varpi>0$ sufficient large such
that $(\lambda I+c\partial)$ is invertible and $||(\lambda
I+c\partial)^{-1}||\leq |\frac{c}{\lambda}|$ whenever $\lambda\leq
-\varpi$. With the same reasoning used previously, we see that
${\lambda}I-\Pi_L$ is invertible provided $\lambda\leq -\varpi$.
This prove the part of (v).

Notice that both $\Omega_{+}$ and $\Omega_{-}$ are connected open
subsets of $\mathbb C$. Due to Proposition \ref{atep1} and Lemma
\ref{atel1} in Appendix, both $(\Pi_L-\lambda I)$ and
$(\Pi_L^*-\lambda I)$ are semi-Fredholm operators whenever
$\lambda\in\Omega_{-}\cup\Omega_{+}$. Since
$\rho(\Pi_L)\cap\Omega_{+}\neq\emptyset$ and
$\rho(\Pi_L)\cap\Omega_{-}\neq\emptyset$, according the first
paragraph on p243 of \cite{Kato}, the followings hold true:

(a1) $(\Pi_L-\lambda I)$ is Fredholm of index zero if
$\lambda\in\Omega_{-}\cup\Omega_{+}$.

(a2) Suppose that
$\lambda\in\sigma(\Pi_L)\cap(\Omega_{+}\cup\Omega_{-})$, then
$\lambda$ is a isolated eigenvalue with finite algebraic
multiplicity.

Furthermore, Lemma \ref{tezlp1} implies (\ref{stz4}). Therefore (i)
is completed. As a consequence of (i), (ii) is true. Next, we show
(iii) and (iv). Analogously, (a1) and (a2) remain valid for
$\Pi_L^*$. Notice that $0\in\sigma(\Pi_L)$, and so
$\frak{s}(\Pi_L)\geq 0>-\infty$. By \cite{Thi}, the resolvent
positivity yields that $\frak{s}(\Pi_L)\in\sigma(\Pi_L)$. In
particular, $\frak{s}(\Pi_L)\in\sigma(\Pi_L)\cap\Omega_+$ since
$\overline{\iota}<0$. Therefore, (a1) and (a2) imply that
$\text{Ind}(\Pi_L-\frak{s}(\Pi_L)I)=0$ and $\frak{s}(\Pi_L)$ is a
isolated eigenvalue with finite algebraic multiplicity. It follows
from Lemma \ref{atel1} that
\[
\text{codim}\mathcal{R}(\Pi_L^{*}-\frak{s}(\Pi_L)I)=\text{codim}\mathcal{R}
(\Pi_L-\frak{s}(\Pi_L)I)^{*}\geq\dim\mathcal{N}(\Pi_L-\frak{s}(\Pi_L)I)>0.
\]
Consequently, $\frak{s}(\Pi_L)\in\sigma(\Pi_L^*)$. By the resolvent
positivity of $\Pi_L^*$, we infer that
$\frak{s}(\Pi_L^*)\in\sigma(\Pi_L^{*})$ and $\frak{s}(\Pi_L^*)\geq
\frak{s}(\Pi_L)\geq 0$. Moreover, $\frak{s}(\Pi_L^*)$ has a positive
eigenfunction $\Psi$. Suppose that $\frak{s}(\Pi_L^*)>0$. Observe
that $U'(\xi)$ is an eigenvalue of $\Pi_L$ corresponding to
eigenvalue $0$ and hence
$\frak{s}(\Pi_L^*)U'(\xi)\in\mathcal{R}(\Pi_L-\frak{s}(\Pi_L^*)I)$.
Since $(\Pi_L-\frak{s}(\Pi_L^*)I)^{*}=\Pi_L^*-\frak{s}(\Pi_L^*)I$,
it follows from lemma \ref{atel1} in Appendix that
\[
\frak{s}(\Pi_L^*)\int_{\BR}\Psi(\xi)U'(\xi)d\xi=0,
\]
which is impossible since $U'\gg0$. Thus
$\frak{s}(\Pi_L^*)=\frak{s}(\Pi_L)=0$. We now prove the simplicity
of eigenvalue $0$, without loss of generality, we assume that $c>0$.
We first show that $\mathcal{N}(\Pi_L)=\text{span}\{U'\}$. Suppose
this not true, then there is an eigenfunction $\psi$ associated with
eigenvalue $0$ such that $\psi\neq tU'$ for all $t\in\BR$.
Obviously, $\psi\in W^{2,p}(\BR)$. In view of Theorem \ref{ttz3},
$|\psi(\xi)|=O(e^{\lambda^{s}_{+}\xi}),$ as $\xi\rightarrow\infty$,
and $|\psi(\xi)|=O(e^{\lambda^{u}_{-}\xi}),$ as
$\xi\rightarrow-\infty$. Due to the positivity of $U'$, there exist
$t$ such that $tU'+\psi\geq 0$. Let $\overline{t}=\inf\{t\in\BR:
tU'+\psi\geq 0\}$. Obviously, $\overline{t}U'+\psi\neq 0$. Set
$\overline{w}=\overline{t}U'+\psi$ and
$\Sigma=\{\xi\in\BR|\overline{w}(\xi)=0\}$. Note that $\Sigma$ is
not empty by our assumption. Furthermore, $\Sigma$ is a close set
and $\Sigma\backslash\text{inter}\Sigma\neq\emptyset$ . Let
$\xi_0\in\Sigma\backslash\text{inter}\Sigma$. Certainly, for each
$\varepsilon>0$, there is a point $\xi_\varepsilon\in
(\xi_0-\frac{1}{2}\varepsilon,\xi_0+\frac{1}{2}\varepsilon)$ such
that $\overline{w}(\xi_\varepsilon)>0$. Since, for any
$\gamma>\max_{0\leq U\leq 1}|f_r(U,J*U)|$,
$c\overline{w}'+\gamma\overline{w}=(L\overline{w}+\gamma
\overline{w})\gneq0$, simple calculation shows that
\[
\overline{w}(\xi)=\int^{\xi}_{-\infty}e^{-\frac{\gamma}{c}(\xi-\eta)}(L+\gamma
I)\overline{w}(\eta)d\eta,\,\,\ \xi\in\BR.
\]
Clearly, $\overline{w}(\xi)>0$ for any $\xi\geq\xi_0+\varepsilon$.
Thanks to (H1), there exist $a,b$ with $b>a>0$ such that
$(-b,-a)\cup(a,b)\subseteq\text{supp}J$. Since $\varepsilon$ can be
chosen sufficiently small such that $\varepsilon<a$, we find that
$\text{supp}J(\xi_0-\cdot)\cap\text{supp}\overline{w}(\cdot)$
contains a nonempty open interval. Hence $J*\overline{w}(\xi_0)>0$.
On the other hand, $\overline{w}'(\xi_0)=0$ implies that
\[
0\leq
f_s(U,J*U)(J*\overline{w})(\xi_0)\leq(\Pi_{L}\overline{w})(\xi_0)=0.
\]
(H3) forces that $J*\overline{w}(\xi_0)=0$, thus we reach a
contradiction. The contradiction leads to the desired conclusion
that $\mathcal{N}(\Pi_L)=\text{span}\{U'\}$. As mentioned early, we
can similarly show that $\mathcal{N}(\Pi_L)=\text{span}\{U'\}$ for
the case that $d>0$. However, the proof is much simpler. Indeed, we
have
$c\overline{w}'-d\overline{w}''+\gamma\overline{w}=(L\overline{w}+\gamma
\overline{w})\gneq0$, where $\gamma$ is the constant same as one
defined above. Then
\[
\overline{w}(\xi)=\int^{\xi}_{-\infty}e^{\mu_{-}}(L+\gamma
I)\overline{w}(\eta)d\eta+\int^{\infty}_{\xi}e^{\mu_{+}}(L+\gamma
I)\overline{w}(\eta)d\eta,\,\,\ \xi\in\BR,
\]
where $\mu_{\pm}=[c\pm\sqrt{c^2+4d\gamma}](2d)^{-1}$. Thus,
$\overline{w}=\overline{t}U'+\psi\gg 0$, which violates the
definition of $\overline{t}$, and the contradiction yields the
conclusion we need. Next, we show that
$\mathcal{N}(\Pi_{L})^2=\mathcal{N}(\Pi_{L})$, we argue by
contradictions. Let $\Pi_{L}\Phi=t_1U'$ for some $\Phi\in L^{p}$ and
$t_1\in\BR$, that is, $t_1U'\in\mathcal{R}(\Pi_{L})$. Therefore,
$t_1\int_{\BR}\Psi(\eta)U'(\eta)d\eta=0$, which is a contradiction.
with the same reasoning, we can show that
$\mathcal{N}(\Pi^{*}_{L})=\text{span}\{\Psi\}$ and $0$ is also a
simple eigenvalue of $\Pi^{*}_{L}$. Thus, we proved that (iii) and
$\dim \mathcal{N}(\Pi_L)=\dim
\mathcal{N}(\Pi_L^{*})=\text{codim}\mathcal{R}(\Pi_L)=
\text{codim}\mathcal{R}(\Pi_L^{*})=1$. Note that (\ref{stz5}) is
ensured by Lemma \ref{atel1} if $\Pi_L$ is considered in $L^{p}$
with $1\leq p<\infty$. In case $p=\infty$,
$\mathcal{R}(\Pi_L)\subseteq \{h\in L^{\infty}|\int h\Psi=0\}$
implies (\ref{stz5}). Hence (iv) is completed. Certainly,
$\lambda\in\rho(\Pi_L)$ for any $\lambda\in\mathbb{C}$ with
$\text{Re}\lambda>0$. Moreover, by using the arguments similar to
those in \cite{bates2} (see pg 124, also refer to \cite{Volp}), we
can show that $\Pi_L-i\eta$ is injective for any $\eta\in\mathbb R$.
On the other hand, for each $\eta\in\BR$, $\Pi_L-i\eta$ is Fredholm
of index zero. Hence $i\eta\in\rho(\Pi_L)$ for any $\eta\in\BR$ and
(v) is proved.

\end{proof}

\section{Appendix}

Let $\Delta _{L_{\pm}-\lambda I}:$ $\mathbb C\rightarrow\mathbb C$
be the characteristic equations associated with the operators
$\Pi_{L_\pm}-\lambda I$, which is defined by
\[
dz^2-cz-\lambda+a^{\pm }+b^{\pm }\int_{\mathbb R}J(s)e^{-zs}ds.
\]
Let $\Delta^{*} _{L_{\pm}-\overline{\lambda }I }:$ $\mathbb
C\rightarrow\mathbb C$ be the characteristic equations associated
with the adjoint operators $(\Pi_{L_\pm}-\lambda I)^{*}$, which is
defined by
\[
dz^2+cz-\overline{\lambda}+a^{\pm }+b^{\pm }\int_{\mathbb
R}J(s)e^{-zs}ds.
\]

\begin{remark}\label{tezre1}
In light of proposition \ref{tep1}, it is clear that there exist
$\Lambda>0$ such that $\Delta_{L_{+}-\lambda}(z)=0$ ( res
$\Delta_{L_{-}-\lambda}(z)=0$) has no solution in the vertical strip
$\{\lambda\in\mathbb C|-\Lambda\leq Re z\leq\Lambda\}$ provided
$\Delta_{L_{+}-\lambda}(i\eta)\neq 0$ (res
$\Delta_{L_{-}-\lambda}(i\eta)\neq 0$) for any $\eta\in\mathbb R$.
In fact, for any $K>0$, the set $\{-K\leq Re z\leq
K|\Delta_{L_{\pm}-\lambda}(z)=0\}$ is bounded. Since
$\Delta_{L_{\pm}-\lambda}(z)$ is analytic on $\mathbb C$, there are
only finite zeros of $\Delta_{L_{\pm}-\lambda}(z)$ located in the
strip $-K\leq Re z\leq K$, thus, there must exist $\Lambda>0$ such
that $\Delta_{L_{\pm}-\lambda}(z)=0$ has no solution in the vertical
strip $\{\lambda\in\mathbb C|-\Lambda\leq\text{Re}z\leq\Lambda\}$.
In addition, if $\Delta_{L_{+}-\lambda I}(i\eta)\neq0$ (res
$\Delta_{L_{-}-\lambda I}(i\eta)\neq0$) for any $\eta\in\BR$, then
$\Delta_{L_{+}-\overline{\lambda}I}^{*}(i\eta)\neq0$ (res
$\Delta^{*}_{L_{-}-\overline{\lambda}I}(i\eta)\neq0$) for any
$\eta\in\BR$.
\end{remark}

\begin{definition}
The operator $\Pi_{L_+}-\lambda I$ $(\Pi_{L_-}-\lambda I)$ is called
hyperbolic if $\Delta_{L_{+}-\lambda I}(i\eta)\neq0$
$(\Delta_{L_{-}-\lambda I}(i\eta)\neq0)$ for any $\eta\in\BR.$ The
operator $\Pi_L-\lambda I$ is called asymptotic hyperbolic if both
$\Pi_{L_+}-\lambda I$ and $\Pi_{L_-}-\lambda I$ are hyperbolic.
Similarly, the operator $(\Pi_{L_+}-\lambda I)^{*}$
$((\Pi_{L_-}-\lambda I)^{*})$ is called hyperbolic if
$\Delta^{*}_{L_{+}-\lambda I}(i\eta)\neq0$
$(\Delta^{*}_{L_{-}-\lambda I}(i\eta)\neq0)$ for any $\eta\in\BR.$
The operator $(\Pi_L-\lambda I)^{*}$ is called asymptotic hyperbolic
if both $(\Pi_{L_+}-\lambda I)^{*}$ and $(\Pi_{L_-}-\lambda I)^{*}$
are hyperbolic.
\end{definition}

\begin{proposition}\label{tep3}
If $\lambda\in\mathbb C$ such that $\Pi_{L_+}-\lambda I$ is
hyperbolic, then the operator $\Pi_{L_+}-\lambda I$ is an
isomorphism from $W^{1,p}$ onto $L^p$ for $1\leq p\leq\infty$
provided $d=0$. If $d>0$, then $\Pi_{L_+}-\lambda I$ is an
isomorphism from $W^{2,p}$ onto $L^p$ for $1\leq p\leq\infty$. In
each case,the inverse is given by convolution
\begin{equation*}
[(\Pi_{L_+}-\lambda
I)^{-1}h](\xi)=(G^{\lambda}_{L_+}*h)(\xi)=\int_{\mathbb
R}G^{\lambda}_{L_+}(\xi-\eta)H(\eta)d\eta
\end{equation*}
with a function $G^{\lambda}_{L_+}$ which enjoys the estimate
\begin{equation}\label{tezc1}
|G^{\lambda}_{L_+}(\xi)|\leq K'e^{-\alpha|\xi|},\,\,\,\ \xi\in\BR,
\end{equation}
for some constants $k'$ and $\alpha$. Moreover, the same assertion
is valid for $\Pi_{L_-}-\lambda I$.
\end{proposition}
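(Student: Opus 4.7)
The plan is to adapt the proof of \lemref{tte1} essentially verbatim, observing that
$(\Pi_{L_+}-\lambda I)v = dv''-cv'+(a^+ -\lambda)v + b^+ J*v$
has exactly the form of the operator $L_0$ from (\ref{te8}), only with the real coefficient $a$ replaced by the complex constant $a^+-\lambda$ and $b$ replaced by $b^+$. The hyperbolicity hypothesis $\Delta_{L_+-\lambda I}(i\eta)\neq 0$ on $\BR$ takes the place of the sign assumptions of \lemref{tep1}, and via Remark~\ref{tezre1} it provides a vertical strip $|\Re z|\leq\alpha$ of positive width on which $\Delta_{L_+-\lambda I}^{-1}(z)$ is analytic. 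That strip is what drives the whole argument.

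First I would define the candidate Green's function by
\[
G^{\lambda}_{L_+}(\xi)=\frac{1}{2\pi}\int_{\BR}e^{i\xi\eta}\Delta_{L_+-\lambda I}^{-1}(i\eta)\,d\eta,
\]
interpreted as a tempered distribution, and verify as in \lemref{tte1} that
\[
dG^{\lambda\prime\prime}_{L_+}(\xi)-cG^{\lambda\prime}_{L_+}(\xi)+(a^+-\lambda)G^{\lambda}_{L_+}(\xi)+b^+J*G^{\lambda}_{L_+}(\xi)=\delta(\xi),
\]
with the usual jump of size $1/c$ (when $d=0$) at $\xi=0$ and absolute continuity elsewhere; if $d>0$, then $G^{\lambda}_{L_+}$ itself is absolutely continuous and its derivative has a jump at the origin. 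Next I would establish (\ref{tezc1}) by shifting the integration contour in the defining integral to the lines $\Re z=\pm\alpha$. When $d=0$ one splits
$\Delta_{L_+-\lambda I}^{-1}(z) = [-c(z+k)]^{-1}+R(z)$
for large $k>2\alpha$, with $R(z)=O(|\Im z|^{-2})$ uniformly in the strip, so the shift is legal and produces the factor $e^{-\alpha|\xi|}$ together with an absolutely convergent remainder. For $d>0$ the estimate $\Delta_{L_+-\lambda I}^{-1}(z)=O(|\Im z|^{-2})$ holds directly in the strip, so the shift is immediate and even gives control on $G^{\lambda\prime}_{L_+}$.

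Having (\ref{tezc1}) in hand, I would set $v:=G^{\lambda}_{L_+}*h$ and use Young's inequality to obtain $\|v\|_{L^p}\leq \|G^{\lambda}_{L_+}\|_{L^1}\|h\|_{L^p}$, with the analogous bound on $v'$ (and on $v''$ when $d>0$), so $v$ lies in the correct domain $W^{1,p}$ or $W^{2,p}$. That $v$ actually solves $(\Pi_{L_+}-\lambda I)v=h$ is then checked by the same Fubini/integration-by-parts computation displayed in the proof of \lemref{tte1}, testing against $C^\infty_c$ functions and invoking the distributional identity for $G^{\lambda}_{L_+}$ (including the jump term when $d=0$). Injectivity is immediate: if $(\Pi_{L_+}-\lambda I)u=0$ for some $u$ in the domain, taking the Fourier transform of $u$ as a tempered distribution yields $\Delta_{L_+-\lambda I}(i\eta)\widehat{u}(\eta)=0$, and hyperbolicity forces $\widehat u=0$, hence $u=0$.

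The main technical obstacle is the case $d=0$, exactly as in \lemref{tte1}: one must keep track of the jump discontinuity of $G^{\lambda}_{L_+}$ at the origin and show that the convolution $v=G^{\lambda}_{L_+}*h$ still sits in $W^{1,p}$, which requires the $L^1$ estimate on $G^{\lambda\prime}_{L_+}$ away from $0$ together with a careful distributional computation through the jump. Once that is done, the statement for $\Pi_{L_-}-\lambda I$ follows by replacing $a^+,b^+$ by $a^-,b^-$ throughout and running the identical argument.
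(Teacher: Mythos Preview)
Your proposal is correct and follows essentially the same approach as the paper: invoke \lemref{tte1} for the isomorphism and convolution structure, then use the zero-free strip supplied by Remark~\ref{tezre1} to establish the exponential decay (\ref{tezc1}) via contour shifting. The only cosmetic difference is that the paper shifts the contour all the way past the nearest zeros $k_\pm$ of $\Delta_{L_+-\lambda I}$ (picking up residues to record the sharp rate $O(\xi^{i}e^{k_-\xi})$ as $\xi\to\infty$) before choosing any $\alpha<\min\{|k_-|,|k_+|\}$, whereas you stay inside the zero-free strip and obtain (\ref{tezc1}) directly without residues---which is all the proposition actually requires.
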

\begin{proof}
Invoking Lemma\ref{tte1}, we only need to show (\ref{tezc1}). By the
remark \ref{tezre1}, there exist $m>0$ such that all zeros of
$\Delta_{L_{+}-\lambda}$ lie outside of the strip
$\{\lambda\in\mathbb C||\text{Re}z|\leq m\}$. We define
\[
k^+=\inf\{\text{Re}z :\Delta_{L_{+}-\lambda}(z)=0,\,\ \text{Re}z>0\}
\]
and
\[
k^-=\sup\{\text{Re}z :\Delta_{L_{+}-\lambda}(z)=0,\,\
\text{Re}z<0\}.
\]
Choose $\varepsilon'>0$ sufficiently small such that
$\Delta_{L_{+}-\lambda}(z)$ only has finite number of zeros in the
strip $k_{-}-\varepsilon'<\text{Re}z\leq\varepsilon'$ and
$\Delta_{L_{+}-\lambda}(z)$ is analytic on
$\text{Re}z=k_{-}-\varepsilon'$. Again, we let
\[
G^{\lambda}_{L_+}(\xi)=\int_{\BR}e^{i\xi\eta}\Delta^{-1}_{L_{+}-\lambda}(i\eta)d\eta.
\]
By the reasoning used in the proof of Lemma \ref{tte1}, we find, for
any $\xi\geq0,$
\begin{eqnarray*}
G^{\lambda}_{L_+}(\xi)
&=&\sum\text{Res}(e^{z\xi}\Delta^{-1}_{L_{+}-\lambda}(z))|_{k_{-}-\varepsilon'<\text{Re}z\leq\varepsilon'}
+e^{(k_{-}-\varepsilon')\xi}\int_{\BR}e^{i\xi\eta}\Delta^{-1}_{L_{+}-\lambda}(k^{-}-\varepsilon'+i\eta)d\eta\\
&=&O(\xi^{i}e^{k_{-}\xi}),\,\,\, \text{as}\,\ \xi\rightarrow\infty.
\end{eqnarray*}
Here we assume that the zero with $\text{Re}z=k_-$ is a $i$th zero
of $\Delta_{L_{+}-\lambda}(z)$ in the strip
$k^{-}-\varepsilon'<\text{Re}z\leq\varepsilon'$. Analogously, we
have
$$G^{\lambda}_{L_+}(\xi)=O(\xi^{j}e^{k_{+}\xi}),\,\,\,
\text{as}\,\ \xi\rightarrow-\infty$$ for some $j\geq 0$. Let
$0<\alpha<\min\{|k_{-}|,|k_{+}|\}$, then (\ref{tezc1}) follows.
\end{proof}

\begin{lemma}\label{tezlp1}
Assume that $\lambda\in\mathbb C$ such that $\Pi_L-\lambda I$ is
asymptotic hyperbolic. Suppose $(\Pi_L-\lambda I)u=h$, where $h\in
L^p$. $u\in W^{1,p}$ when $d=0$, and $u\in W^{2,p}$ when $d\neq0$.
Then
\begin{equation}\label{te23}
|u(\xi)|\leq K_1e^{-\mu |\xi|}||u||_{L^\infty }+K_2\int_{\mathbb
R}e^{-\mu |\xi -\eta |}|h(\eta)|d\eta ,\quad \xi \in \mathbb R.
\end{equation}
\begin{equation}\label{tezap001}
||u||_{W^{1,p}}\leq K_3||u||_{L^p}+K_4||h||_{L^p},\,\,\,\,\ \text{if
}d=0.
\end{equation}
\begin{equation}\label{tezap002}
||u||_{W^{2,p}}\leq K_5||u||_{L^p}+K_6||h||_{L^p},\,\,\,\, \text{if
}d>0.
\end{equation}
Here all the constants $\mu$ and $K_{i} (i=1,\cdot,\cdot,6)$ are
positive and independent of $u$ and $h$.
\end{lemma}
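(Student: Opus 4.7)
The plan is to reduce the lemma to the constant-coefficient asymptotic operators $\Pi_{L_\pm}-\lambda I$ by a cutoff argument, and then read off the pointwise bound from the Green's function estimates already established in the Appendix. Since $L(\xi) \to L_\pm$ at $\pm\infty$, the operator $M_\pm(\xi):=L(\xi)-L_\pm$ tends to zero in $L^\infty$ at $\pm\infty$. Fix $\tau>0$ large and use the cutoffs $\vartheta_\pm$ from \eqref{te2.2}; define
\[
\overline{L}_\pm=L_\pm+\vartheta_\pm M_\pm,\qquad \overline{M}_\pm=(1-\vartheta_\pm)M_\pm.
\]
Then $\vartheta_\pm M_\pm$ is a bounded operator whose $L^\infty$-norm can be made arbitrarily small by enlarging $\tau$, while $\overline{M}_\pm$ is supported on $\{\mp\xi\geq-\tau\}$.

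First I would verify that a perturbation statement analogous to \propref{pte2} applies here. Under the asymptotic hyperbolicity hypothesis, \propref{tep3} provides that $\Pi_{L_\pm}-\lambda I$ is an isomorphism with convolution inverse governed by a Green's function $G^\lambda_{L_\pm}$ satisfying the exponential bound \eqref{tezc1}. Repeating the Neumann series argument of \propref{pte2} verbatim (with $L_0$ replaced by $\Pi_{L_\pm}-\lambda I$ and $Q=\vartheta_\pm M_\pm$), for $\tau$ large the operator $\Pi_{\overline{L}_\pm}-\lambda I$ is also an isomorphism, with a Green's function $\overline{G}_\pm^\lambda(\xi,\eta)$ satisfying
\[
|\overline{G}_\pm^\lambda(\xi,\eta)|\leq K e^{-\mu|\xi-\eta|}
\]
for some positive $K,\mu$ independent of $u,h$.

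The equation $(\Pi_L-\lambda I)u=h$ can be rewritten as
\[
(\Pi_{\overline{L}_\pm}-\lambda I)u = h - \overline{M}_\pm u,
\]
so applying the Green's function representation for the $+$ version gives
\[
u(\xi)=\int_{\mathbb R}\overline{G}_+^\lambda(\xi,\eta)h(\eta)\,d\eta - \int_{-\infty}^\tau \overline{G}_+^\lambda(\xi,\eta)(M_+(\eta)u(\eta))\,d\eta.
\]
For $\xi\geq 2\tau$, the second integral is bounded by $C\|u\|_{L^\infty}\int_{-\infty}^\tau e^{-\mu(\xi-\eta)}d\eta \leq C' e^{-\mu\xi/2}\|u\|_{L^\infty}$, while the first is bounded by $K\int e^{-\mu|\xi-\eta|}|h(\eta)|d\eta$. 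The analogous argument with $\overline{G}_-^\lambda$ handles $\xi\leq -2\tau$, and on the compact interval $[-2\tau,2\tau]$ the trivial bound $|u(\xi)|\leq\|u\|_{L^\infty}$ can be absorbed into the first term of \eqref{te23} after enlarging $K_1$. This yields \eqref{te23}.

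For the Sobolev bounds \eqref{tezap001} and \eqref{tezap002}, I would argue directly from the differential equation. When $d=0$, the equation reads $cu'=(\lambda I - L(\xi))u+h$, and since $L(\xi)$ is a bounded operator with $\|L(\xi)u\|_{L^p}\leq(\|a\|_\infty+\|b\|_\infty\|J\|_{L^1})\|u\|_{L^p}$, we immediately obtain \eqref{tezap001}. When $d>0$, note that $-d\partial^2+c\partial-\omega I$ is an isomorphism $W^{2,p}\to L^p$ for $\omega$ large, with the standard elliptic estimate $\|u\|_{W^{2,p}}\leq C(\|du''-cu'\|_{L^p}+\|u\|_{L^p})$. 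Since $du''-cu'=(\lambda I - L(\xi))u + h$, this gives \eqref{tezap002}. The main obstacle is really the first step: confirming that the small-perturbation Green's function construction from \propref{pte2} carries over when the base operator is $\Pi_{L_\pm}-\lambda I$ with complex $\lambda$ rather than the real hyperbolic $L_0$ of Lemma \ref{tep1}; but since the construction uses only the exponential decay of $G^\lambda_{L_\pm}$ from \propref{tep3} and the Neumann series bound $\|QL_0^{-1}\|<1$, both of which are available here, this step goes through without essential change.
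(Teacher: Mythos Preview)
Your proposal is correct and follows essentially the same route as the paper: the cutoff decomposition $L=\overline{L}_\pm+\overline{M}_\pm$, the Neumann-series Green's function for $\Pi_{\overline{L}_\pm}-\lambda I$ via \propref{tep3} and \propref{pte2}, and the integral representation that isolates the compactly supported remainder are exactly what the paper does. The only presentational difference is in \eqref{tezap001}--\eqref{tezap002}: you read off $u'$ (respectively $u''$) directly from the equation and standard elliptic bounds, whereas the paper introduces the auxiliary operator $\Lambda_\omega v=dv''-cv'-\omega v$ and inverts it; both arguments are equivalent and rely on the same boundedness of $L(\xi)$ on $L^p$.
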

\begin{proof}
Let $G^{\lambda}_{L_+}$ and $G^{\lambda}_{L_-}$ are Green functions
for $\Pi_{L_+}-\lambda I$ and $\Pi_{L_-}-\lambda I$, respectively.
Invoking (\ref{tep3}), we may assume that both $G^{\lambda}_{L_+}$
and $G^{\lambda}_{L_-}$ enjoy the estimate (\ref{tezc1}). Note that
$\Pi_L-\lambda I=\Pi_{L_\pm}-\lambda I+M_{\pm}$, where $M_{\pm}$ are
given by (\ref{tezs1}).

Set
$$(\widetilde{L}^{\lambda}_{\pm}u)(\xi):=(L_{\pm}-\lambda I)u(\xi)+(\vartheta^{\lambda}_{\pm}M_{\pm})(\xi)u(\xi),$$
$$(M^{\lambda}_{\pm}u)(\xi):=((1-\vartheta^{\lambda}_{\pm})M_{\pm
})(\xi)u(\xi),$$  where $\vartheta^{\lambda}_{\pm}(\xi)$ are the
unit step functions similar to (\ref{te2.2}). Then $(\Pi_L-\lambda
I)u=h$ is equivalent to
\begin{equation}\label{te19}
\widetilde{\Pi}^{\lambda}_{L_{\pm}}u=h-M^{\lambda}_{\pm}u,
\end{equation}
where
$\widetilde{\Pi}^{\lambda}_{L_{\pm}}=du''-cu'+(\widetilde{L}^{\lambda}_{\pm}u)$.
Choose $\vartheta^{\lambda}_{\pm}$ such that
$\vartheta^{\lambda}_{\pm}M_{\pm}(\xi)$ satisfy the conditions of
Proposition \ref{pte2}, here we may assume that
$\vartheta^{\lambda}_{\pm}$ have their jump points at $\pm\sigma$
respectively. Then $\widetilde{\Pi}^{\lambda}_{L_{\pm}}$ are
isomorphisms from $W^{1,p}(W^{2,p})$ onto $L^{p}$ for $1\leq p\leq
\infty$. Let $\widetilde{G}^{\lambda}_{L_{\pm}}$ be the Green
functions for $\widetilde{\Pi}^{\lambda}_{L_{\pm}}$, then
Proposition \ref{pte2} yields
\[
|\widetilde{G}^{\lambda}_{L_{\pm}}(\xi)|\leq
C_{\lambda}e^{-\mu|\xi|}
\]
for some positive constants $C_\lambda,\mu$. Consequently, we have
either
\begin{eqnarray*}
u(\xi) &=&\int_{\mathbb R}\widetilde{G}^{\lambda}_{L_{+}}(\xi,\eta
)[-M_{+}^\lambda u(\eta )]d\eta +\int_{\mathbb
R}\widetilde{G}^{\lambda}_{L_{+}}(\xi ,\eta
)h(\eta )d\eta  \\
&=&\int_{-\infty }^\sigma \widetilde{G}^{\lambda}_{L_{+}}(\xi,\eta
)[-M_{+}^\lambda u(\eta )]d\eta +\int_{\mathbb
R}\widetilde{G}^{\lambda}_{L_{+}}(\xi ,\eta
)h(\eta )d\eta  \\
&\leq &\int_{-\infty }^\sigma e^{-\mu|\xi -\eta |}||M_{+}^\lambda
||||u||_{L^\infty }d\eta +\int_{-\infty }^\infty e^{-\mu|\xi -\eta
|}h(\eta )d\eta
\end{eqnarray*}
or
\begin{eqnarray*}
u(\xi) &=&\int_{\mathbb R}\widetilde{G}^{\lambda}_{L_{-}}(\xi,\eta
)[-M_{-}^\lambda u(\eta )]d\eta +\int_{\mathbb
R}\widetilde{G}^{\lambda}_{L_{-}}(\xi
,\eta )h(\eta )d\eta  \\
&\leq &\int_{-\sigma }^\infty e^{-\mu|\xi -\eta |}||M_{-}^\lambda
||||u||_{L^\infty }d\eta +\int_{-\infty }^\infty e^{-\mu |\xi -\eta
|}h(\eta )d\eta.
\end{eqnarray*}
Thus, (\ref{te23}) follows.  Now we define
\begin{equation}\label{te22}
(\Lambda _\omega v)(\xi)=dv''(\xi)-cv'(\xi)-\omega v(\xi),
\end{equation}
then we have
\begin{equation}\label{te24}
\Lambda _\omega u=(\lambda-\omega)u-Lu+h,
\end{equation}
where $(Lv)(\xi):=L(\xi)v(\xi)$. As long as $\omega
>0$ is sufficiently large $\Lambda _\omega ^{-1}:$ $L^p\rightarrow W^{1,p}$ $(W^{2,p})$
exists and satisfies $||\Lambda _\omega
^{-1}u||_{W^{1,p}}\leq\omega_p||u||_{L^{p}}$ provided $d=0$.
($||\Lambda _\omega ^{-1}u||_{W^{1,p}}\leq\omega_p||u||_{L^{p}}$ if
$d>0$), where $\omega_p$ only depends on $d,c,\omega$ and $p$. Since
$L$ is a bounded operator in $L^{p}$, the desired conclusions
(\ref{tezap001}) and (\ref{tezap002}) follow.
\end{proof}

\begin{remark}\label{tezr2}
In virtue of remark \ref{tezre1} and Lemma \ref{tezlp1},
$(\Pi_{L}-{\lambda}I)^{*}$ is asymptotically hyperbolic if and only
if $\Pi_{L}-\lambda I$ is asymptotically hyperbolic, where
$\lambda\in\mathbb C$. Suppose that $\Pi_{L}-\lambda I$ is
asymptotically hyperbolic and $\mathcal{N}(\Pi_{L}-\lambda I)$ is
nonempty, where $\mathcal{N}(\Pi_{L}-\lambda I)$ is the kernel of
$\Pi_{L}-\lambda I$. Let $\phi\in \mathcal{N}(\Pi_{L}-\lambda I)$,
then Lemma \ref{tezlp1} implies that $\phi$ decays exponentially at
infinity. Clearly, the same conclusion holds true for
$(\Pi_{L}-\lambda I)^{*}$ provided it has nonempty kernel.
\end{remark}

\begin{proposition}\label{tezpp2}
Assume that $\lambda \in \mathbb{C}$ such that $(\Pi_{L}-\lambda
I)^{*}$ is asymptotically hyperbolic. Suppose for some $p$ that
there are bounded sequences $u_n\in W^{1,p}$ ( $W^{2,p}$ when $d>0$)
and $h_n\in L^p$ such that $(\Pi_{L}-\lambda I)^{*}u_n=h_n$ and
$h_n\rightarrow \overline{h}$ in $L^{p}$. Then there exists a
subsequence $u_{n^{\prime }}$ and some $\overline{u}\in W^{1,p}$ (
$W^{2,p}$) such that $u_{n^{\prime }}\rightarrow \overline{u}$ in
$W^{1,p}$ ( $W^{2,p}$) and $(\Pi_{L}-\lambda
I)\overline{u}=\overline{h}.$
\end{proposition}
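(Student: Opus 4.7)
The plan is to mirror the proof of Proposition \ref{tezpp1}, now applied to the adjoint operator. By hypothesis $(\Pi_L - \lambda I)^{*}$ is asymptotically hyperbolic, so by Remark \ref{tezr2} and the analog of Lemma \ref{tezlp1} for the adjoint (whose structural form is the same, with $-c$ in place of $c$ and the same even kernel $J$), the bounded sequence $\{u_n\}$ satisfies the pointwise decay estimate
\[
|u_n(\xi)| \leq K_1 e^{-\mu|\xi|}\|u_n\|_{L^\infty} + K_2 \int_{\BR} e^{-\mu|\xi-\eta|}|h_n(\eta)|\,d\eta,\qquad \xi\in\BR,
\]
together with the regularity bound $\|u_n\|_{W^{1,p}} \leq K_3\|u_n\|_{L^p} + K_4\|h_n\|_{L^p}$ (and $\|u_n\|_{W^{2,p}}$ when $d>0$). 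Since $\|u_n\|_{L^p}$ and $\|h_n\|_{L^p}$ are bounded, the sequence $\{u_n\}$ is bounded in $W^{1,p}$ (resp.\ $W^{2,p}$).

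First I would extract a subsequence that converges uniformly on compact intervals. For $p>1$, Sobolev embedding yields equicontinuity on compact intervals, and when $d>0$ the same applies to $\{u_n'\}$; for $p=1$ one argues directly from the equation as in \cite{Mall}. A diagonal extraction produces $\overline{u}\in L^\infty$ with $u_{n'}\to \overline{u}$ uniformly on compacts. Next I would upgrade this to $L^p$ convergence. Setting
\[
g_n(\xi)=K_2\int_{\BR}e^{-\mu|\xi-\eta|}|h_n(\eta)|\,d\eta,\qquad \overline{g}(\xi)=K_2\int_{\BR}e^{-\mu|\xi-\eta|}|\overline{h}(\eta)|\,d\eta,
\]
Young's inequality gives $g_n,\overline{g}\in L^p\cap L^\infty$ and $\|g_n-\overline{g}\|_{L^p}+\|g_n-\overline{g}\|_{L^\infty}\to 0$. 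Combined with the dominating inequality above, the generalized dominated convergence theorem (with the Br\'ezis--Lieb lemma) gives $u_{n'}\to \overline{u}$ in $L^p$ for $1\leq p<\infty$; for $p=\infty$, uniform convergence on compact intervals together with the tail bound coming from $\|h_n-\overline{h}\|_{L^\infty}\to 0$ yields $u_{n'}\to\overline{u}$ in $L^\infty$, exactly as in Proposition \ref{tezpp1}.

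Third, I would pass to the limit in the adjoint equation. Since $\overline{u}\in L^p$ and $J\in L^1$, dominated convergence gives $J*(f_s(U,J*U)u_{n'})\to J*(f_s(U,J*U)\overline{u})$ pointwise (and in $L^p$), and the non-convolution terms converge by the modes already established. Interpreting the equation in the distributional/mild form used in Proposition \ref{tezpp1} (writing $u_n = \Lambda_\omega^{-1}((\lambda-\omega)u_n - L^{*}u_n + h_n)$ for large $\omega$) and passing to the limit produces $(\Pi_L - \lambda I)^{*}\overline{u}=\overline{h}$, and in particular $\overline{u}\in D((\Pi_L-\lambda I)^{*})$. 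Finally, the regularity estimate applied to the difference,
\[
\|u_{n'}-\overline{u}\|_{W^{1,p}} \leq K_3\|u_{n'}-\overline{u}\|_{L^p} + K_4\|h_{n'}-\overline{h}\|_{L^p},
\]
(with $W^{2,p}$ in place of $W^{1,p}$ when $d>0$) upgrades the $L^p$ convergence to convergence in the appropriate Sobolev norm. The main obstacle will be the endpoint cases $p=1$ and $p=\infty$, where equicontinuity and the passage to the limit must be handled through the integrated form of the equation rather than through Sobolev embedding — this is precisely the delicate step that required the \cite{Mall}-style argument and the pointwise shifting trick in the proof of Proposition \ref{tezpp1}, and the same machinery transfers here once Lemma \ref{tezlp1} is invoked for the adjoint.
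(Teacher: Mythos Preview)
Your proposal is correct and follows essentially the same route as the paper: both reduce the claim to rerunning the argument of Proposition \ref{tezpp1} for the adjoint operator, relying on the adjoint version of Lemma \ref{tezlp1} to obtain the pointwise decay bound and the Sobolev regularity estimate, and then using dominated convergence/Br\'ezis--Lieb and the mild form $u_n=\Lambda_\omega^{-1}(\cdots)$ to pass to the limit. The only point the paper makes explicit that you leave implicit is the form of the perturbation operator: because the adjoint nonlocal term is $\int_{\BR}J(\xi-\eta)f_s(U(\eta),J*U(\eta))u(\eta)\,d\eta$ (coefficient depending on $\eta$, not $\xi$), the paper introduces the modified operators $N_{\pm}$ and places the cutoff $\vartheta^{\lambda}_{\pm}$ on $u$ rather than on the coefficient, so that Proposition \ref{pte2} still applies; your phrase ``the analog of Lemma \ref{tezlp1} for the adjoint'' covers exactly this adaptation.
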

\begin{proof}
Let the operator $N_{\pm}(\xi):L^{p}\rightarrow L^{p}$ defined by
\[
N_{\pm}(\xi)u(\xi)=[f_{s}(U(\xi),J*U(\xi))-f_s(\pm1,\pm1)]u(\xi)
+\int_{\BR}J(\xi-\eta)[f_{s}(U(\eta),J*U(\eta))-f_s(\pm1,\pm1)]u(\eta)d\eta.
\]
Also set
$$(\widehat{L}^{\lambda}_{\pm}u)(\xi):=(L_{\pm}-\lambda I)u(\xi)+N_{\pm}(\xi)(\vartheta^{\lambda}_{\pm}u
)(\xi),$$
$$(N^{\lambda}_{\pm}u)(\xi):=N_{\pm
}(\xi)((1-\vartheta^{\lambda}_{\pm})u)(\xi),$$  where
$\vartheta^{\lambda}_{\pm}(\xi)$ are same as these defined in Lemma
\ref{tezlp1}.  Let
$\widehat{\Pi}^{\lambda}_{L_{\pm}}=du''-cu'+(\widehat{L}^{\lambda}_{\pm}u)$.Then,
with the same reasoning, we draw the desired conclusion.

\end{proof}

\begin{lemma}\label{atel1}
Suppose that $\Pi_{L}-\lambda I$ is asymptotically hyperbolic. Then
the operator $\Pi_L-\lambda I:L^p\rightarrow L^p$ is Fredholm
operator for each $p$ $(1\leq p<\infty )$. Furthermore, the range
$\mathcal{R}(\Pi_L-\lambda I)$ is given by
\[
\mathcal{R}(\Pi_{L}-\lambda I)=\{h\in L^p|\int_\BR\overline{w}(\xi
)h(\xi )d\xi =0,\quad w(\xi )\in \mathcal{N}((\Pi_{L}-\lambda
I)^{*}))\}.
\]
In particular,
\[
\dim\mathcal{N}(\Pi_{L}-\lambda
I)^{*}=\text{codim}\mathcal{R}(\Pi_{L}-\lambda I),\,\,\
\dim\mathcal{N}(\Pi_{L}-\lambda
I)=\text{codim}\mathcal{R}(\Pi_{L}-\lambda I)^{*},
\]
\[
\text{Ind}(\Pi_{L}-\lambda I)=-\text{Ind}(\Pi_{L}-\lambda I)^{*}.
\]
In case $p=\infty$, $\Pi_{L}-\lambda I$ is semi-Fredholm operator.
Additionally, the operator $\Pi_{L}-\lambda I$ is Fredholm if $J$
has compact support.
\end{lemma}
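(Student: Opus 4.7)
The plan is to derive the Fredholm property from the compactness criterion established in Proposition \ref{tezpp1} (and its adjoint version Proposition \ref{tezpp2}), and then to obtain the range characterization through the classical annihilator identity using the $L^p$--$L^q$ duality. Throughout, I note that $\Pi_L - \lambda I$ is a special case of the operator $L$ of (\ref{teb3}) with the coefficient $a(\xi)$ shifted by $-\lambda$, so asymptotic hyperbolicity of $\Pi_L - \lambda I$ makes Propositions \ref{tezpp1}--\ref{tezpp2} directly available.

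First I would show $\dim \mathcal{N}(\Pi_L - \lambda I) < \infty$: applying Proposition \ref{tezpp1} with $h_n \equiv 0$, any bounded sequence in the kernel admits a subsequence convergent in $D(L)$, so the closed unit ball of the kernel is relatively compact, forcing finite dimensionality. Remark \ref{tezr2} together with Proposition \ref{tezpp2} gives the same conclusion for $\mathcal{N}((\Pi_L - \lambda I)^{*})$. Next I would prove $\mathcal{R}(\Pi_L - \lambda I)$ is closed. Given $h_n = (\Pi_L - \lambda I)u_n \to h^{*}$ in $L^p$, I may replace $u_n$ by its projection onto a topological complement of the finite-dimensional kernel without changing $h_n$. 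If the resulting sequence were unbounded in norm, normalizing $\widetilde{u}_n := u_n/\|u_n\|$ gives $(\Pi_L - \lambda I)\widetilde{u}_n \to 0$, and Proposition \ref{tezpp1} produces a subsequential limit $\widetilde{u}^{*}$ which lies both in $\mathcal{N}(\Pi_L - \lambda I)$ and in its complement while satisfying $\|\widetilde{u}^{*}\|=1$, a contradiction. Hence $\{u_n\}$ is bounded and Proposition \ref{tezpp1} delivers a limit $u^{*} \in D(L)$ with $(\Pi_L - \lambda I)u^{*} = h^{*}$, closing the range. For $1 \leq p < \infty$ this yields the Fredholm property, since both kernels are finite-dimensional.

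The range characterization then follows by duality. The inclusion $\mathcal{R}(\Pi_L - \lambda I) \subseteq \{h \in L^p : \int_{\BR} \overline{w}(\xi) h(\xi)\,d\xi = 0 \text{ for all } w \in \mathcal{N}((\Pi_L - \lambda I)^{*})\}$ is immediate from identity (\ref{tezpe}). For $1 \leq p < \infty$, under the identification $(L^p)^{*} = L^q$ the formal adjoint $(\Pi_L - \lambda I)^{*}$ coincides with the Banach-space adjoint (the two agree on the dense test class $C_c^\infty(\BR)$ and both are closed), so the closed-range theorem $\mathcal{R}(A) = {}^{\perp}\mathcal{N}(A^{*})$ yields the reverse inclusion. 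The stated identities on kernel dimensions, codimensions, and indices are then formal consequences of the Fredholm alternative.

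The main obstacle is the case $p = \infty$, where $(L^\infty)^{*}$ is strictly larger than $L^1$ and the annihilator duality cannot be invoked directly. My plan there is to retain the two previous conclusions---finite-dimensional kernel and closed range---which together amount to semi-Fredholmness, which is all that can be asserted in general. Under the extra hypothesis that $J$ has compact support, I would exploit the strict locality of the convolution in two steps: first, by Remark \ref{tezr2} every $w \in \mathcal{N}((\Pi_L - \lambda I)^{*})$ decays exponentially and hence lies in $L^1$, so pairing against $w$ defines a bounded linear functional on $L^\infty$; second, compact support of $J$ together with exponential decay of the Green function permits a truncation argument reducing the inhomogeneous problem in $L^\infty$ to one on bounded subintervals with Riesz-type compactness, showing that the finite collection of constraints $\int \overline{w}h = 0$, $w \in \mathcal{N}((\Pi_L - \lambda I)^{*})$, span the annihilator of $\mathcal{R}(\Pi_L - \lambda I)$. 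This supplies the missing finite-codimensionality and upgrades the semi-Fredholm conclusion to the full Fredholm property.
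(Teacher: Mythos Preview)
Your treatment of the finite-dimensional kernel and the closedness of the range is essentially the paper's argument. For $1\le p<\infty$ you appeal to the abstract closed-range theorem after asserting that the formal adjoint coincides with the Banach adjoint; the paper instead shows directly that any $v\in L^q$ annihilating $\mathcal{R}(\Pi_L-\lambda I)$ solves the adjoint equation distributionally (by testing against $C_c^\infty$), hence lies in $\mathcal{N}((\Pi_L-\lambda I)^{*})$. Your justification that the two adjoints ``agree on $C_c^\infty$ and are both closed'' is not by itself enough---one still needs the regularity step placing the domain of the Banach adjoint inside $W^{1,q}$ (or $W^{2,q}$)---but that is precisely the paper's test-function computation, so this is a minor patch.

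The real gap is in your $p=\infty$ plan under the compact-support hypothesis on $J$. Invoking ``truncation to bounded subintervals with Riesz-type compactness'' does not explain how to produce an $L^\infty$ preimage for a given $h$ satisfying the orthogonality constraints; compactness on bounded intervals gives no control at infinity, which is exactly where the $L^\infty$ difficulty lives. The paper's mechanism is different and concrete: given $h$, set $w_{\pm}=(\widetilde{\Pi}^{\lambda}_{L_{\pm}})^{-1}h$ using the perturbed asymptotic isomorphisms of Lemma~\ref{tezlp1}, and glue $w^{*}=m\,w_{+}+(1-m)\,w_{-}$ with a smooth cutoff $m$. Because $J$ has compact support, $(\Pi_L-\lambda I)w^{*}=h$ for all sufficiently large $|\xi|$, so $h_1:=(\Pi_L-\lambda I)w^{*}\in\mathcal{R}(\Pi_L-\lambda I)$ agrees with $h$ outside a compact set. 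The remainder $h_2=h-h_1$ is compactly supported, hence lies in $\mathcal{N}((\Pi_L-\lambda I)^{*})^0_{p}$ for finite $p$, and the already-established case yields $h_2\in\mathcal{R}(\Pi_L-\lambda I)$. This decomposition---solve at infinity first, then reduce to the finite-$p$ case on a compact residual---is the idea your outline is missing.
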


\begin{proof}
The proof of this lemma is very similar to that of Theorem A in
\cite{Mall}, we shall therefore only sketch the proof. As usual, we
shall only give the proof for the case that $d=0$ since the proof
for the case that $d>0$ can be completed analogously. We start to
show that the unit ball
\[
\mathcal{B}=\{u\in W^{1,p}|u\in \mathcal{N}(\Pi_{L}-\lambda
I),||u||_{W^{1,p}}\leq 1\}
\]
in $\mathcal{N}(\Pi_{L}-\lambda I)$ $\subset W^{1,p}$ is compact,
and hence we can conclude $\dim \mathcal{N}(\Pi_{L}-\lambda
I)<\infty $. It is worth pointing out that
$\mathcal{N}(\Pi_{L}-\lambda I)$ is independent of $p.$ Indeed, this
can be inferred from the remark \ref{tezr2}. Now, we choose any
sequence $u_n\in \mathcal{B},$ then by Proposition \ref{tezpp1} with
$h_n=0$, there exists a subsequence $u_{n^{^{\prime }}}\rightarrow
u^{*}$ in $W^{1,p}$ for some $u^{*}$ with $(\Pi_{L}-\lambda
I)u^{*}=0$. Therefore, $u^{*}\in \mathcal{B}$ and $\mathcal{B}$ is
compact.

Next we let $p$ be fixed and we show that
$\mathcal{R}(\Pi_{L}-\lambda I)$ is closed. Let $h_n\in
\mathcal{R}(\Pi_{L}-\lambda I)$ $\subseteq L^p$ such that
$h_n\rightarrow h^{*}$ in $L^p$, then we need to show that $h^{*}\in
\mathcal{R}(\Pi_{L}-\lambda I).$ Let $\mathcal{C}$ $\subseteq
W^{1,p}$ be a closed subspace complement of
$\mathcal{N}(\Pi_{L}-\lambda I)$, that is,
$W^{1,p}=\mathcal{N}(\Pi_{L}-\lambda I)\oplus \mathcal{C}.$ Clearly,
there exists a sequence $u_n\in \mathcal{C}$ such that
$(\Pi_{L}-\lambda I)u_n=h_n.$ As shown in \cite{Mall},
$||u_n||_{W^{1,p}}$ must be bounded, hence Proposition \ref{tezpp1}
implies that there exists $u^{*}\in \mathcal{C}$ such that
$(\Pi_{L}-\lambda I)u^{*}=h^{*}.$ This prove the closeness of
$\mathcal{R}(\Pi_L-\lambda I).$ Therefore, $\Pi_{L}-\lambda I$ is
semi-Fredholm.

Now, we assume that $1\leq p<\infty$, in order to prove
$(\Pi_{L}-\lambda I)$ is Fredholm, it suffices to show that
$\mathcal{R}(\Pi_{L}-\lambda I)$ has finite codimensions in $L^p.$
To this end, we let $\mathcal{N(}(\Pi_{L}-\lambda
I)^{*})^0_{p}\subseteq L^p$ denote
\[
\mathcal{N(}(\Pi_{L}-\lambda I)^{*})^0_{p}=\{h\in
L^p|\int_\BR\overline{w}(\xi)h(\xi )d\xi =0,\quad w(\xi)\in
\mathcal{N}((\Pi_{L}-\lambda I)^{*})\}.
\]
From remark \ref{tezr2} , we see that $(\Pi_{L}-\lambda I)^{*}$ is
also asymptotically hyperbolic, and hence Proposition \ref{tezpp1}
together above arguments imply that
$\dim\mathcal{N}((\Pi_{L}-\lambda I)^{*})<\infty $, where
$\mathcal{N}((\Pi_{L}-\lambda I)^{*})\subseteq W^{1,q}.$ Certainly,
codim$\mathcal{N(}(\Pi_{L}-\lambda I)^{*})^0_{p}=\dim
\mathcal{N}((\Pi_{L}-\lambda I)^{*})<\infty $. To complete the
proof, we show that $\mathcal{N(}(\Pi_{L}-\lambda
I)^{*})^0_{p}=\mathcal{R}(\Pi_{L}-\lambda I)$. In view of
(\ref{tezpe}), we have $\mathcal{R}(\Pi_{L}-\lambda
I)\subseteq\mathcal{N}((\Pi_{L}-\lambda I)^{*})^0_{p}$. Assume for
some $p$ $(1\leq p<\infty ) $ that $\mathcal{R}(\Pi_{L}-\lambda
I)\neq \mathcal{N(}(\Pi_{L}-\lambda I)^{*})^0_{p}$, then there
exists $v^{*}\in \mathcal{R}(\Pi_{L}-\lambda I)^{\bot}$ and
$\int_\BR\overline{v^{*}(\xi )}h(\xi )d\xi \neq 0$ for some $h\in
\mathcal{N(}(\Pi_{L}-\lambda I)^{*})^0_{p}$, where
$\mathcal{R}(\Pi_{L}-\lambda I)^{\bot}=\{v\in
L^q|\int_\BR\overline{v(\xi)}g(\xi )=0,\quad g\in
\mathcal{R}(\Pi_{L}-\lambda I)\}$. Clearly, $v^{*}\notin
\mathcal{N}((\Pi_{L}-\lambda I)^{*}).$ On the other hand, we have
\[
\int_\BR\overline{v(\xi)}(\Pi_{L}-\lambda I)u(\xi)d\xi=0,u\in
W^{1,p}.
\]
Choose any $\chi \in C^\infty (\BR,\mathbb C)$ with compact support
and set $u=\overline{\chi}$ . By taking the complex conjugates, we
find
\begin{eqnarray*}
0 &=&\int_{\BR}v(\xi )\overline{(\Pi_{L}-\lambda I)\overline{\chi (\xi)}}d\xi  \\
&=&d\int_{\BR}\chi(\xi)v''(\xi )d\xi+c\int_{\BR}\chi (\xi)v'(\xi
)d\xi+\int_{\BR}\chi (\xi)(L^{*}(\xi)-\overline{\lambda })v(\xi)d\xi\\
&=&\int_{\BR}\chi(\xi)((\Pi_{L}-\lambda I)^{*}v)(\xi)d\xi.
\end{eqnarray*}
This indicates that $v$ solves the adjoint equation in the sense of
distributions and $v\in W^{1,q}$. Thus
$v\in\mathcal{N}((\Pi_{L}-\lambda I)^{*}).$ This contradiction
establishes that $\mathcal{R}(\Pi_{L}-\lambda
I)=\mathcal{N(}(\Pi_{L}-\lambda I)^{*})^0_{p}.$ for $1\leq
p<\infty$.

For the case of $p=\infty $, once again, (\ref{tezpe}) implies that
$\mathcal{R}(\Pi_{L}-\lambda I)\subseteq\mathcal{N(}(\Pi_{L}-\lambda
I)^{*})^0_{\infty}.$ Suppose that $J$ has compact support. We need
to show that $\mathcal{R}(\Pi_{L}-\lambda
I)\supseteq\mathcal{N(}(\Pi_{L}-\lambda I)^{*})^0_{\infty}.$ We
start to show that every $h\in\mathcal{N(}(\Pi_{L}-\lambda
I)^{*})^0_{\infty}$ can be written as $h=h_1+h_2,$ where
$h_1\in\mathcal{R}(\Pi_{L}-\lambda I),$ and $h_1=h$ whenever
$|\xi|\geq \tau$ for some positive constant $\tau$.
 Certainly, $h_2\in\mathcal{N(}(\Pi_{L}-\lambda I)^{*})^0_{\infty}$
and $h_2$ has compact support. Therefore,
$h_2\in\mathcal{N(}(\Pi_{L}-\lambda I)^{*})^0_{p}$. As shown before,
$h_2\in\mathcal{R}(\Pi_{L}-\lambda I)$. Hence we have
$h\in\mathcal{R}(\Pi_{L}-\lambda I)$ as desired. To construct $h_1$,
we let $w_{\pm}=(\widetilde{\Pi}^{\lambda}_{L_{\pm}})^{-1}h,$ Where
$\widetilde{\Pi}^{\lambda}_{L_{\pm}}$ are isomorphisms given in
Lemma \ref{tezlp1}. Due to Lemma \ref{tezlp1}, there exists
$\sigma>0$ such that $(\Pi_{L}-\lambda I)w_{+}=h$ for any
$\xi\geq\sigma$, while $(\Pi_{L}-\lambda I)w_{-}=h$ for any
$\xi\leq-\sigma$. Now, we let
$w^{*}(\xi)=m(\xi)w_{+}(\xi)+(1-m(\xi))w_{-}(\xi)$, where
$m:\BR\rightarrow\BR^{+}$ is a $C^2$ function such that $m(\xi)=0$
for $\xi\leq0$, and $m(\xi)=1$ for $\xi\geq1$. Since $J$ has compact
support , the direct computation shows that $(\Pi_{L}-\lambda
I)w^{*}=h$ provided $|\xi|$ is sufficiently large. Choose
$h_1=(\Pi_{L}-\lambda I)w^{*}$, as required. Hence the proof is
completed.
\end{proof}

Now we set
$\overline{\iota}=\max\{a^{+}+b^{+},a^{-}+b^{-}\},\underline{\iota }
=\min \{a^{+}-b^{+},a^{-}-b^{-}\}$.

$\Omega_{+}=\{\lambda\in\mathbb{C}|\text{Re}\lambda>\overline{\iota}\},$
$\Omega_{-}=\{\lambda\in\mathbb{C}|\text{Re}\lambda<\underline{\iota}\}$,
$\Xi=\{\lambda\in\mathbb{C}|\text{Re}\lambda>\overline{\iota}\}
\cup\{\lambda\in\mathbb{C}|\text{Im}z|>c^2\sqrt{\overline{\iota}-\text{Re}z}+(b^{+}\wedge
b^{-})\}$.

\begin{proposition}\label{atep1}
If $\lambda\in\Xi$ then $\lambda I-\Pi_L$ is asymptotic hyperbolic
when $d>0$.  In case $d=0$, the same conclusion also holds true if
$\lambda\in\Omega_{+}\cup\Omega_{-}.$
\end{proposition}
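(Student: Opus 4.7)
The plan is to analyze the characteristic functions
\[
\Delta_{L_\pm - \lambda I}(z) = dz^2 - cz - \lambda + a^\pm + b^\pm \int_\BR J(s) e^{-zs}\,ds
\]
directly on the imaginary axis and verify that, under the stated hypotheses on $\lambda$, neither vanishes there. Since $J\geq 0$ is even with $\int J = 1$, the map $\eta\mapsto \int_\BR J(s) e^{-i\eta s}\,ds = \int_\BR J(s)\cos(\eta s)\,ds$ is real and bounded in absolute value by $1$. Denoting it by $\widehat J(\eta)$ and splitting $\Delta_{L_\pm - \lambda I}(i\eta)$ into real and imaginary parts yields
\[
\Re\Delta_{L_\pm - \lambda I}(i\eta) = -d\eta^2 - \Re\lambda + a^\pm + b^\pm \widehat J(\eta), \qquad \Im\Delta_{L_\pm - \lambda I}(i\eta) = -c\eta - \Im\lambda,
\]
so the task reduces to ruling out simultaneous vanishing of the two expressions for both signs.

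If $\Re\lambda > \overline{\iota}$, then since $b^\pm > 0$ by (H3) and $\widehat J(\eta)\leq 1$, one has $\Re\Delta_{L_\pm - \lambda I}(i\eta) \leq -d\eta^2 - (\Re\lambda - \overline{\iota}) < 0$ for every $\eta\in\BR$, so $\Delta\neq 0$ regardless of the sign of $d$. If $d = 0$ and $\Re\lambda < \underline{\iota}$, then $\widehat J(\eta)\geq -1$ yields $\Re\Delta_{L_\pm - \lambda I}(i\eta) \geq \underline{\iota} - \Re\lambda > 0$. These two bounds together settle the case $d=0$ with $\lambda\in\Omega_+\cup\Omega_-$, and the subregion $\{\Re\lambda > \overline{\iota}\}$ of $\Xi$ when $d > 0$. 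For the remaining subregion of $\Xi$, namely $d > 0$ and $\Re\lambda\leq \overline{\iota}$ with large $|\Im\lambda|$, we bring in the imaginary part: assuming $c\neq 0$, vanishing of $\Im\Delta_{L_\pm - \lambda I}(i\eta)$ forces $\eta = -\Im\lambda/c$, and substituting into the real part together with $b^\pm \widehat J\leq b^\pm$ gives $d(\Im\lambda)^2/c^2 \leq a^\pm + b^\pm - \Re\lambda \leq \overline{\iota} - \Re\lambda$. Once $|\Im\lambda|$ exceeds the threshold built into the definition of $\Xi$, this inequality fails and hyperbolicity follows. When $c = 0$, hypothesis (H1) forces $d > 0$ and $\Im\Delta_{L_\pm - \lambda I}(i\eta) = -\Im\lambda$, so any $\lambda$ with $\Im\lambda \neq 0$ is automatically covered.

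The only nontrivial point is reconciling the precise constants in the inequality defining $\Xi$ (in particular the additive term $b^+\wedge b^-$) with the elementary estimates above. The thresholds for the two signs differ because $a^\pm + b^\pm$ differ, so one must take the worse of the two to obtain the common region on which both $\Pi_{L_+} - \lambda I$ and $\Pi_{L_-} - \lambda I$ are hyperbolic; that intersection is, by definition, the set on which $\Pi_L - \lambda I$ is asymptotically hyperbolic. Once the bookkeeping on these constants is carried out, the proposition follows.
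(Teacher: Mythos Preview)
Your approach is essentially the same as the paper's: both compute $\Delta_{L_\pm-\lambda I}(i\eta)$, split into real and imaginary parts, and rule out simultaneous vanishing by case analysis on $\Re\lambda$ and $|\Im\lambda|$. Your treatment of the cases $\Re\lambda>\overline\iota$ and (for $d=0$) $\Re\lambda<\underline\iota$ matches the paper's proof exactly.

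For the remaining subregion of $\Xi$ with $d>0$, your argument is in fact cleaner than the paper's. Since (H1) assumes $J$ even, $\widehat J(\eta)$ is real, so $\Im\Delta_{L_\pm-\lambda I}(i\eta)=-c\eta-\Im\lambda$ vanishes only at $\eta=-\Im\lambda/c$, and substituting gives the sharp threshold $|\Im\lambda|>|c|\,d^{-1/2}\sqrt{\overline\iota-\Re\lambda}$. The paper instead writes an inequality retaining the term $b^\pm\int J(s)\sin(-\eta s)\,ds$ (which is zero under (H1)) and carries along the additive constant $b^+\wedge b^-$; it also drops the factor $d$. The discrepancy you flag between your estimate and the stated form of $\Xi$ is therefore not a gap in your argument but an artifact of the paper itself: the set $\Xi$ is written with slightly different constants in the theorem statement and in the appendix, and neither version records the dependence on $d$ that both your computation and the paper's own line of reasoning produce. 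Your bound is the correct one under the standing hypotheses, and your caveat about ``bookkeeping on these constants'' is all that is needed.
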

\begin{proof}
It is sufficient to prove that $\Delta _{L_{\pm}-\lambda }(i\eta
)\neq 0$ for any $\eta \in\BR$ provided $\text{Re}\lambda
>\overline{\iota}.$ Note that $\Delta _{L_{\pm }-\lambda }(i\eta
)=0$ if and only if
\[
ic\eta +d\eta ^2+(\lambda -a^{\pm})=b^{\pm }\int_{\BR}J(s)e^{-i\eta
s}ds.
\]

First we note that
\[
d\eta ^2+\text{Re}\lambda -a^{\pm }>b^{\pm }\geq |b^{\pm
}\int_{\BR}J(s)e^{-i\eta s}ds|,\quad \eta \in\BR
\]
when $\text{Re}\lambda >\overline{\iota},$ hence $\Delta
_{L_{\pm}-\lambda}(i\eta)\neq 0$ for all $\eta\in\BR,$ provided
$\text{Re}\lambda >\overline{\iota}.$ Now, suppose $\text{Re}\lambda
\leq\overline{\iota},$ it is easy to see that
\[
c^{-2}(\text{Im}z-b^{\pm}\int_{\BR}J(s)\sin(-\eta
s)ds)^2+\text{Re}z-a^{\pm}>b^{\pm}\int_{\BR}J(s)\cos(\eta s)ds,\,\,\
\eta\in\BR,
\]
whenever
$|\text{Im}z|>c^2\sqrt{\overline{\iota}-\text{Re}z}+(b^{+}\wedge
b^{-})$. In case $d=0,$ we still have $\Delta
_{L_{\pm}-\lambda}(i\eta)\neq 0$ if
$\text{Re}\lambda>\overline{\iota}$. Moreover, $\text{Re}\lambda
<\underline{\iota }$ implies
\[
\text{Re}\lambda -a^{\pm }<-b^{\pm }\leq \text{Re}(b^{\pm
}\int_{\BR}J(s)e^{-i\eta s}ds),\quad \eta \in\BR.
\]
Therefore, the desired conclusion follows.
\end{proof}

\end{document}